\renewcommand{\labelenumi}{\roman{enumi})}
\let\originalleft\left
\let\originalright\right
\renewcommand{\left}{\mathopen{}\mathclose\bgroup\originalleft}
\renewcommand{\right}{\aftergroup\egroup\originalright}
\begin{document}

\newcommand{\bO}{{\bf 0}}
\newcommand{\ee}{{\varepsilon}}
\newcommand{\rD}{{\rm D}}

\newtheorem{theorem}{Theorem}[section]
\newtheorem{corollary}[theorem]{Corollary}
\newtheorem{lemma}[theorem]{Lemma}
\newtheorem{proposition}[theorem]{Proposition}
\newtheorem{algorithm}[theorem]{Algorithm}

\theoremstyle{definition}
\newtheorem{definition}{Definition}[section]
\newtheorem{example}[definition]{Example}

\theoremstyle{remark}
\newtheorem{remark}{Remark}[section]

\title{
Chaos in the border-collision normal form: A computer-assisted proof using induced maps and invariant expanding cones
}
\author{
P.A.~Glendinning$^{\dagger}$ and D.J.W.~Simpson$^{\ddagger}$\\
\small $^{\dagger}$Department of Mathematics, University of Manchester, Oxford Road, Manchester M13 9PL, U.K.\\
\small $^{\ddagger}$School of Fundamental Sciences, Massey University, Palmerston North 4410, New Zealand
}
\maketitle


\begin{abstract}

In some maps the existence of an attractor with a positive Lyapunov exponent can be proved by constructing a trapping region in phase space and an invariant expanding cone in tangent space. If this approach fails it may be possible to adapt the strategy by considering an induced map (a first return map for a well-chosen subset of phase space). In this paper we show that such a construction can be applied to the two-dimensional border-collision normal form (a continuous piecewise-linear map) if a certain set of conditions are satisfied and develop an algorithm for checking these conditions. The algorithm requires relatively few computations, so it is a more efficient method than, for example, estimating the Lyapunov exponent from a single orbit in terms of speed, numerical accuracy, and rigor. The algorithm is used to prove the existence of an attractor with a positive Lyapunov exponent numerically in an area of parameter space where the map has strong rotational characteristics and the consideration of an induced map is critical for the proof of robust chaos.

\end{abstract}

\section{Introduction}
\label{sec:intro}
\setcounter{equation}{0}

Piecewise-smooth dynamical systems 
have different evolution rules 
in different parts of phase space.
They provide natural mathematical models for engineering applications
involving impacts or on-off control strategies \cite{AwLa03,Jo03},
are useful for understanding biological systems including gene switching \cite{EdGl14},
and have been employed in computer science, particularly cryptography \cite{KoLi11}.
From a theoretical viewpoint, piecewise-linear systems are commonly used as a test-bed for understanding nonlinear dynamics
as they are reasonably amenable to an exact analysis,
an example being the Lozi map \cite{Lo78} as a piecewise-linear version of the H\'enon map.

Although the ideas presented in this paper are more general, we use
the two-dimensional border-collision normal form (2d BCNF) --- a normal form for continuous maps on $\mathbb{R}^2$
comprised of two affine pieces, as our canonical example.
The 2d BCNF is the family of difference equations with $(x^\prime,y^\prime)^T=f(x,y)$ where
\begin{equation}
f(x,y) = \begin{cases}
A_L \begin{bmatrix} x \\[-.8mm] y \end{bmatrix} + \begin{bmatrix} \mu \\[-.8mm] 0 \end{bmatrix}, & x \le 0, \\[2mm]
A_R \begin{bmatrix} x \\[-.8mm] y \end{bmatrix} + \begin{bmatrix} \mu \\[-.8mm] 0 \end{bmatrix}, & x \ge 0,
\end{cases}
\label{eq:f}
\end{equation}
and with
\begin{align}
A_L &= \begin{bmatrix} \tau_L & 1 \\ -\delta_L & 0 \end{bmatrix}, &
A_R &= \begin{bmatrix} \tau_R & 1 \\ -\delta_R & 0 \end{bmatrix}.
\nonumber
\end{align}
The 2d BCNF has been widely studied, see for instance \cite{DiBu08,GlJe19,Si16} and references within.
In this paper we restrict our attention to parameters with
\begin{align}
\tau_R &\in \mathbb{R}, &
\tau_L &> 0, &
\delta_L, \delta_R &> 0, &
\mu &= 1,
\label{eq:params}
\end{align}
with which $f$ is invertible and orientation-preserving.
The role of $\mu$ is to control the border-collision bifurcation:
the 2d BCNF was originally derived in \cite{NuYo92}
as the leading order terms of a map for which a fixed point collides with a switching manifold when $\mu = 0$.
In view of a linear rescaling it is sufficient to consider $\mu \in \{-1,0,1\}$ and we have put $\mu = 1$. 
The condition $\tau_L>0$ is needed for the definition of the induced map in \S\ref{sec:trappingRegion}.
If $\tau_L = -\tau_R$ and $\delta_L = \delta_R$ then the 2d BCNF reduces to the Lozi map. 

In the seminal paper \cite{BaYo98}, Banerjee {\em et.~al.}~showed that the 2d BCNF is
relevant for describing the behaviour of power converters
and pointed out that strange attractors could exist over open sets of parameter values,
a phenomenon they called {\em robust chaos}.
A more recent formulation and rigorous proof of their insights can be found in \cite{Gl17,GlSi21}.
For the Lozi map such robust chaos had been established much earlier by Misiurewicz \cite{Mi80}.

Whilst the proof of \cite{GlSi21} establishes robust chaos
in the 2d BCNF for the parameter constraints described in \cite{BaYo98},
it is clear from numerical simulations that these constraints are not optimal.
The aim of this paper is to obtain implicit conditions for the existence of chaotic attractors which,
whilst well-nigh impossible to verify by hand, are relatively easy to verify numerically
and allow us to demonstrate (up to computer accuracy and over a discretised parameter grid)
that the 2d BCNF has a chaotic attractor over larger regions of parameter space.
This approach does not rely on the accurate simulation of individual orbits
so is more reliable than an analysis based on a large number of points of one orbit
where rounding errors can lead to misleading results.

The key tool used in this paper is an {\em induced map}.
An induced map $F$ is essentially a first return map for a particular subset of phase space.
That is, for any point $Z$ in this set, $F(Z) = f^n(Z)$ where the number of iterations $n$ is $Z$-dependent.
Induced maps are heavily employed in the study of one-dimensional maps \cite{DeVa93} and an application to the BCNF
is given in \cite{Gl16e}.
In this paper we construct a trapping region and an invariant expanding cone for $F$
in order to establish the existence of robust chaos for $f$.

The sections of this paper are organised to follow the steps of the construction.
First in \S\ref{sec:induced} we define and characterise an induced map $F$ for a well-chosen subset of phase space.
Then in \S\ref{sec:Dp} and \S\ref{sec:trappingRegion} we derive conditions
for $F$ to be well-defined and to have a forward invariant region $\Omega$.
Although $F$ is not continuous and $F(\Omega)$ is not contained in the interior of $\Omega$
(as is necessary for $\Omega$ to be a trapping region and 
which implies the existence of an attractor in $\Omega$),
these issues can be circumvented by imposing a cylindrical topology on $\Omega$.
 
In \S\ref{sec:Lyap} we define invariant expanding cones
and show how their existence implies a positive Lyapunov exponent.
Then in \S\ref{sec:GH}, \S\ref{sec:G}, and \S\ref{sec:iecExistence}
we derive conditions for the existence of such a cone.
The conditions are not given explicitly in terms of the parameters \eqref{eq:params}
but are based on the roots of quadratic polynomials so are elementary to check numerically.
In \S\ref{sec:pMin} we explain why these conditions can we expected to hold when an additional constraint is placed on the parameters.

Then in \S\ref{sec:algorithm} we collate the conditions into an
algorithm which determines up to numerical accuracy whether or not all conditions hold for a given set of parameter values.
This is illustrated for a two-dimensional slice of parameter space in \S\ref{sec:numerics}.
Finally in \S\ref{sec:conc} we discuss generalisations and future directions.

\section{The induced map}
\label{sec:induced}
\setcounter{equation}{0}

The switching manifold of \eqref{eq:f} is the $y$-axis
denoted by $\Sigma$.
For all values of the parameters, $f(\Sigma)$ is the $x$-axis.
For parameters satisfying \eqref{eq:params}, the map $(x^\prime ,y^\prime )^T=f(x,y)$ has the property that
\begin{equation}
\text{the sign of $y'$ is opposite to the sign of $x$}.
\label{eq:signs}
\end{equation}
It follows immediately from this observation that if $Q_i$ denotes the closure of the $i^{\rm th}$ quadrant of $\mathbb{R}^2$ then
\begin{equation}
\begin{split}
f(Q_1), f(Q_4) &\subset Q_3 \cup Q_4 \,,\\
f(Q_2), f(Q_3) &\subset Q_1 \cup Q_2 \,,
\end{split}
\label{eq:quadrants}
\end{equation}
as shown in Fig.~\ref{fig:schemQuadrants}.
Let $\Phi$ be the set obtained by removing $\Sigma$ from $Q_3$, i.e.
\begin{equation}
\Phi = \left\{ (x,y) \,\big|\, x < 0, y \le 0 \right\}.
\label{eq:Fdomain}
\end{equation}
For a given map $f$ of the form \eqref{eq:f} with \eqref{eq:params}, let
$\Phi_{\rm pre} = \bigcup_{n=1}^\infty f^{-n}(\Phi)$
be all points that eventually map to $\Phi$, and let
\begin{equation}
\Phi_0 = \Phi_{\rm pre} \cap \Phi.
\label{eq:Phi0}
\end{equation}

\begin{figure}[b!]
\begin{center}
\includegraphics[height=4.2cm]{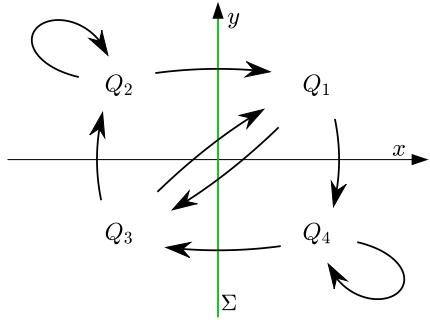}
\caption{
An illustration of the action of \eqref{eq:f} with \eqref{eq:params} on the quadrants of $\mathbb{R}^2$.
For example the image of any point in $Q_1$ belongs to either $Q_3$ or $Q_4$.
Take care to note that while this figure has rotational symmetry,
the map does not have rotational symmetry (the origin $(0,0)$ maps to $(1,0)$).
\label{fig:schemQuadrants}
}
\end{center}
\end{figure}

\begin{definition}
The {\em induced map} $F : \Phi_0 \to \Phi$ is defined as
\begin{equation}
F(Z) = f^n(Z),~
\text{for the smallest $n \ge 1$ for which $f^n(Z) \in \Phi$}.
\label{eq:F}
\end{equation}
\label{df:F}
\end{definition}

\begin{figure}[b!]
\begin{center}
\includegraphics[height=5cm]{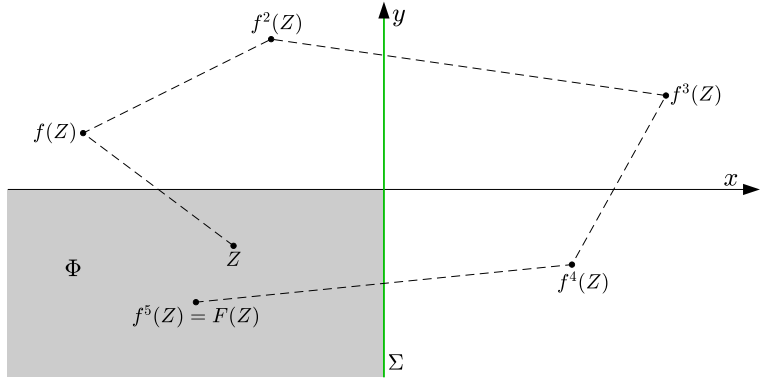}
\caption{
A sketch of part of the forward orbit of a point $Z \in \Phi_0 \subseteq \Phi$
illustrating the induced map $F(Z)$.
\label{fig:schemQQa}
}
\end{center}
\end{figure}

Fig.~\ref{fig:schemQQa} illustrates the construction of $F$.
For the orbit shown we have $F(Z) = f_R^2 \left( f_L^3(Z) \right)$, where
\begin{align}
f_L(Z) &= A_L Z + \begin{bmatrix} \mu \\ 0 \end{bmatrix}, &
f_R(Z) &= A_R Z + \begin{bmatrix} \mu \\ 0 \end{bmatrix},
\nonumber
\end{align}
denote the components of $f$.
Given the way $f$ maps the quadrants of $\mathbb{R}^2$ illustrated in Fig.~\ref{fig:schemQuadrants},
for any $Z \in \Phi_0$ there exist $p, q \ge 1$ such that
\begin{equation}
F(Z) = f_R^q \left( f_L^p(Z) \right).
\label{eq:Fpq}
\end{equation}
In the remainder of this section we prove this assertion.
Let
\begin{equation}
\begin{split}
\Pi_L &= \left\{ (x,y) \,\big|\, x \le 0, y \in \mathbb{R} \right\}, \\
\Pi_R &= \left\{ (x,y) \,\big|\, x \ge 0, y \in \mathbb{R} \right\},
\end{split}
\end{equation}
denote the closed left and right half-planes.

\begin{definition}
Given $Z \in \mathbb{R}^2$,
let $\chi_L(Z)$ be the smallest $i \ge 1$ for which $f^i(Z) \notin \Pi_L$
and let $\chi_R(Z)$ be the smallest $j \ge 1$ for which $f^j(Z) \notin \Pi_R$,
if such $i$ and $j$ exist.
\label{df:chi}
\end{definition}

\begin{lemma}
Let $Z \in \Phi_0$.
Then $p = \chi_L(Z)$ and $q = \chi_R \left( f_L^p(Z) \right)$ exist
and $F(Z)$ is given by \eqref{eq:Fpq}.
\label{le:Fpq}
\end{lemma}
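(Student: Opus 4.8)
The plan is to show that the orbit $Z, f(Z), f^2(Z), \dots$, followed until its first return to $\Phi$, splits cleanly into a \emph{left phase} confined to $\Pi_L$, on which $f$ acts as $f_L$, and a \emph{right phase} confined to $\Pi_R$, on which $f$ acts as $f_R$, with $p$ the length of the first phase and $q$ the length of the second. Since $Z \in \Phi_0$, there is a smallest $n \ge 1$ with $f^n(Z) \in \Phi$, and by Definition \ref{df:F} this point is $F(Z)$; the task is then to prove that $p = \chi_L(Z)$ and $q = \chi_R(f_L^p(Z))$ exist with $n = p + q$.

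I would first record two consequences of \eqref{eq:signs}: a point of $\Pi_L$ maps under $f_L$ to a point with non-negative $y$-coordinate, zero only on $\Sigma$, and a point of $\Pi_R$ maps under $f_R$ to a point with non-positive $y$-coordinate, zero only on $\Sigma$; together with the boundary rule that a point $(0,y_\ast) \in \Sigma$ maps to $(y_\ast + 1, 0)$, so an iterate reaching $\Sigma$ with $y_\ast \ge 0$ is forced into the open right half-plane at the next step (here $\mu = 1 > 0$ is used). Using these I would establish that $p = \chi_L(Z)$ exists and $p \le n$: were the orbit to remain in $\Pi_L$ up to time $n$, every $f^i(Z)$ with $1 \le i \le n$ would have non-negative $y$-coordinate, yet $f^n(Z) \in \Phi$ forces $y_n = 0$, and then the boundary rule gives $x_n \ge 1$, contradicting $f^n(Z) \in \Phi$. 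Because $f^i(Z) \in \Pi_L$ for $0 \le i \le p-1$, the map acts as $f_L$ there, so $f^p(Z) = f_L^p(Z)$, and this point has positive $x$-coordinate and non-negative $y$-coordinate, hence lies in $\Pi_R$; the same sign control rules out any $f^i(Z) \in \Phi$ for $1 \le i \le p-1$, so $n > p$.

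Setting $W = f_L^p(Z)$, the identity $f^{n-p}(W) = f^n(Z)$ together with $x_n < 0$ gives that $q = \chi_R(W)$ exists with $q \le n - p$. By the definition of $q$ the map acts as $f_R$ on $W, f(W), \dots, f^{q-1}(W)$, so $f_R^q(W) = f^{p+q}(Z)$, a point with negative $x$-coordinate and, by the second sign fact, non-positive $y$-coordinate, hence in $\Phi$. Minimality of $n$ then forces $n \le p + q$, which with $p + q \le n$ yields $n = p + q$ and therefore $F(Z) = f_R^q(f_L^p(Z))$. I expect the existence of $p$ to be the main obstacle: one must exclude the orbit lingering in $\Pi_L$ indefinitely, and the delicate point is the strict sign behaviour on the switching manifold $\Sigma$ (where $\mu = 1$ is essential), whereas the right phase and the first-return accounting are comparatively routine once the half-plane decomposition is in place.
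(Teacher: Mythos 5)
Your proof is correct and follows essentially the same route as the paper's: both decompose the orbit into a left phase and a right phase using the sign property \eqref{eq:signs} and then verify that the exit point from $\Pi_R$ lands in $\Phi$. The only (cosmetic) differences are that you establish the existence of $\chi_L(Z)$ by a sign contradiction where the paper invokes $f^{-1}(\Phi) \subset \Pi_R$ and separately treats a first entry to $\Pi_R$ occurring on $\Sigma$, and you check $n = p+q$ a little more explicitly.
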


\begin{proof}
By assumption the forward orbit of $Z$ under $f$ returns to $\Phi$.
To do so the orbit must first enter $\Pi_R$ because $f^{-1}(\Phi) \subset \Pi_R$ by \eqref{eq:signs}.
Let $i \ge 1$ be the smallest number for which $f^i(Z) \in \Pi_R$.
If $f^i(Z) \in {\rm int}(\Pi_R)$ then $f^i(Z) \notin \Pi_L$ and so $p = \chi_L(Z) = i$ and $f^i(Z) = f_L^p(Z)$,
otherwise $f^i(Z)$ lies on the positive $y$-axis in which case $f^{i+1}(Z)$ lies on the positive $x$-axis
and so $p = \chi_L(Z) = i+1$ and $f^{i+1}(Z) = f_L^p(Z)$.
The number $q = \chi_R \left( f_L^p(Z) \right)$ exists because the forward orbit of $Z$ returns to $\Phi$.
Moreover, $f^{p+q}(Z) = f_R^q \left( f_L^p(Z) \right)$
and $f^{p+q}(Z) \in \Phi$ because its first component is negative (by the definition of $\chi_R$)
and its second component is non-positive (because the first component of $f^{p+q-1}(Z)$ is non-negative by the definition of $\chi_R$),
thus $f^{p+q}(Z) = F(Z)$.
\end{proof}

\section{Dividing phase space by preimages of the switching manifold}
\label{sec:Dp}
\setcounter{equation}{0}

In this section we address the dynamics of \eqref{eq:f} in the left half-plane $\Pi_L$.
Since $f_L$ is invertible and affine, $f_L^{-i}(\Sigma)$ is a line for all $i \ge 1$.
If this line is not vertical we write
\begin{equation}
f_L^{-i}(\Sigma) = \left\{ (x,y) \in \mathbb{R}^2 \,\big|\, y = m_i x + c_i \right\},
\label{eq:swManPreimages}
\end{equation}
where $m_i$ and $c_i$ are its slope and $y$-intercept.
Let $p^*$ be the smallest value of $i \ge 1$ for which $m_i \ge 0$,
with $p^* = \infty$ if $m_i < 0$ for all $i \ge 1$.
Let $\phi = \cos^{-1} \!\big( \frac{\tau_L}{2 \sqrt{\delta_L}} \big)\in \left( 0, \frac{\pi}{2} \right)$.
As shown in \cite{Si20e},
\begin{equation}
p^* = \begin{cases}
\left\lceil \frac{\pi}{\phi} - 1 \right\rceil, & 0 < \tau_L < 2 \sqrt{\delta_L} \,, \\
\infty, & \tau_L \ge 2 \sqrt{\delta_L} \,.
\end{cases}
\label{eq:pStar}
\end{equation}
Moreover, for $i = 1,\ldots,p^*$
the $y$-intercepts $c_i$ form a decreasing sequence
whereas the slopes $m_i$ form an increasing sequence \cite{Si20e}.
Consequently the regions
\begin{equation}
\begin{split}
D_1 &= \left\{ (x,y) \in \Pi_L \,\big|\, y > m_1 x + c_1 \right\}, \\
D_p &= \left\{ (x,y) \in \Pi_L \,\big|\, m_p x + c_p < y \le m_{p-1} x + c_{p-1} \right\}, \qquad
\text{for}~ p = 2,\ldots,p^*,
\end{split}
\label{eq:Dp}
\end{equation}
are disjoint and partition $\Pi_L$ above $f_L^{-p^*}(\Sigma)$, as shown in Fig.~\ref{fig:schemQQc}.
Under $f$ every point in $D_1$ maps to the interior of $\Pi_R$,
while for any $p \in \{ 2,\ldots,p^* \}$ every point in $D_p$ maps to $D_{p-1}$ \cite{Si20e}.
Consequently we have the following relationship between $D_p$ and $\chi_L$.

\begin{figure}[b!]
\begin{center}
\includegraphics[height=5cm]{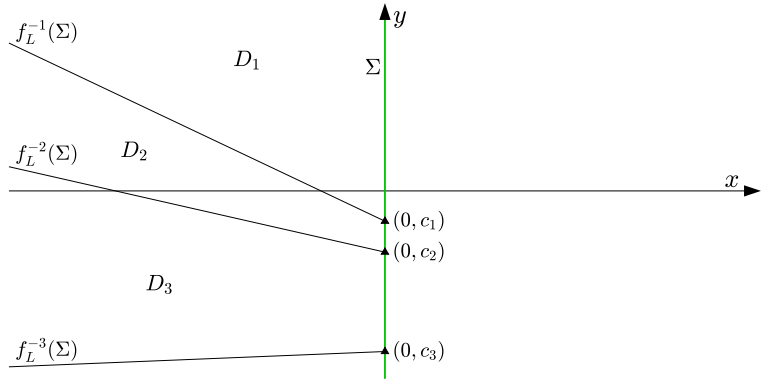}
\caption{
A sketch showing the regions $D_1,\ldots,D_{p^*}$ \eqref{eq:Dp}
in a case for which $p^* = 3$.
\label{fig:schemQQc}
}
\end{center}
\end{figure}

\begin{lemma}
If $Z \in D_p$ then $\chi_L(Z) = p$.
\label{le:chiL}
\end{lemma}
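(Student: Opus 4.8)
The plan is to prove the lemma by a short induction on $p$, leaning entirely on the two mapping properties quoted (from \cite{Si20e}) just before the statement: that every point of $D_1$ maps under $f$ into the interior of $\Pi_R$, and that for $p \in \{2,\ldots,p^*\}$ every point of $D_p$ maps into $D_{p-1}$. The claim $\chi_L(Z) = p$ unpacks, via Definition~\ref{df:chi}, into showing that the iterates $f(Z),\ldots,f^{p-1}(Z)$ all stay in $\Pi_L$ while $f^p(Z)$ is the first to leave it, so the whole proof is really a bookkeeping argument tracking an orbit through the nested regions $D_p \supset \cdots$ down to $D_1$.

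For the base case $p = 1$ I would note that $D_1 \subset \Pi_L$, so $f$ acts there as $f_L$, and by the first quoted property $f(Z) \in \mathrm{int}(\Pi_R)$. Since $\mathrm{int}(\Pi_R) = \{x > 0\}$ is disjoint from $\Pi_L = \{x \le 0\}$, we get $f(Z) \notin \Pi_L$, hence $\chi_L(Z) = 1$. For the inductive step, assume the result for $p-1$ and take $Z \in D_p$ with $2 \le p \le p^*$. The second quoted property gives $f(Z) \in D_{p-1} \subset \Pi_L$, so the orbit has not left at step $1$, and applying the inductive hypothesis to $f(Z)$ yields $\chi_L(f(Z)) = p-1$. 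Re-indexing $f^{j}(f(Z)) = f^{j+1}(Z)$ then gives $f^i(Z) \in \Pi_L$ for $1 \le i \le p-1$ and $f^p(Z) \notin \Pi_L$, whence $\chi_L(Z) = p$.

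The same conclusion can be reached without induction, simply by iterating the two properties directly: $f^k(Z) \in D_{p-k}$ for $k = 0,\ldots,p-1$, so each of these iterates lies in some $D_j \subset \Pi_L$, and then $f^p(Z) = f\!\left(f^{p-1}(Z)\right) \in \mathrm{int}(\Pi_R)$ is the first iterate outside $\Pi_L$. I would probably present whichever of these reads more cleanly, but they are the same argument.

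There is no genuine analytic obstacle here, since the real work — the geometry of the preimages $f_L^{-i}(\Sigma)$, the monotonicity of the slopes $m_i$ and intercepts $c_i$, and the resulting mapping behaviour of the $D_p$ — is imported from \cite{Si20e}. The one point demanding care, and the place where sloppiness would break the count, is the distinction between $\Pi_R$ and its interior: because $\Pi_L$ and $\Pi_R$ share the $y$-axis $\Sigma$, it is essential that $D_1$ maps into $\mathrm{int}(\Pi_R)$ rather than merely into $\Pi_R$, for otherwise an image landing on $\Sigma$ would still belong to $\Pi_L$ and would spoil the claim that $\chi_L(Z) = p$ exactly. I would therefore make that disjointness explicit at the step where the orbit exits.
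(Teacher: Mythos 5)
Your proof is correct and is exactly the argument the paper intends: the paper states this lemma without proof as an immediate consequence ("Consequently...") of the two mapping properties imported from \cite{Si20e}, and your induction (or direct iteration) simply writes out that deduction, with the useful extra care about $D_1$ mapping into ${\rm int}(\Pi_R)$ rather than $\Pi_R$.
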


\section{A trapping region for the induced map.}
\label{sec:trappingRegion}
\setcounter{equation}{0}

Notice $p^*\ge 2$ by \eqref{eq:pStar}.
Furthermore, the preimage of $\Sigma$ in $\Pi_L$ consists of points with $\tau_L x + y + 1 = 0$ and so $m_1 = -\tau_L$ and $c_1 = -1$.
Given $1 \le p_{\rm min} < p_{\rm max} \le p^*$ (with $p_{\rm max}$ finite), let
\begin{align}
S &= \left( 0, c_{p_{\rm max}} \right), &
T &= \left( 0, c_{p_{\rm min}} \right).
\label{eq:st}
\end{align}
Observe that $f(S) = \left( c_{p_{\rm max}} + 1, 0 \right)$ and $f(T) = \left( c_{p_{\rm min}} + 1, 0 \right)$
are points on the $x$-axis with $x \le 0$ (because $c_{p_{\rm min}} \le c_1 = -1$).
Also $f^{p_{\rm max}}(S) \in \Sigma$ and $f^{p_{\rm min}}(T) \in \Sigma$, so
\begin{equation}
\chi_L(S) = p_{\rm max} + 1, \quad \chi_L(T) = p_{\rm min} + 1.
\label{eq:chiLST}
\end{equation}
Let
\begin{equation}
\Omega = \left\{ (x,y) \in \Phi \,\Big|\,
\tfrac{c_{p_{\rm max}}+1}{c_{p_{\rm max}}}(c_{p_{\rm max}}-y) \le x \le
\tfrac{c_{p_{\rm min}}+1}{c_{p_{\rm min}}}(c_{p_{\rm min}}-y) \right\},
\label{eq:Omega}
\end{equation}
be the quadrilateral with vertices $S$, $T$, $f(S)$, and $f(T)$, as shown in Fig.~\ref{fig:schemQQe}.
Assume
\begin{align}
f(S), f(T) \in \Phi_{\rm pre} \,,
\label{eq:fSfT}
\end{align}
so that $q_S = \chi_R \left( f_L^{\chi_L(S)}(S) \right)$ and $q_T = \chi_R \left( f_L^{\chi_L(T)}(T) \right)$ are well-defined,
and let
\begin{align}
q_{\rm min} &= \min \left[ q_S, q_T \right], &
q_{\rm max} &= \max \left[ q_S, q_T \right] + 1.
\label{eq:qMinqMax}
\end{align}
As illustrated in Fig.~\ref{fig:schemQQe},
let $U$ denote the point at which the line through $F(S)$ and $f^{-1}(F(S))$ intersects $\Sigma$,
and similarly let $V$ denote the point at which the line through $F(T)$ and $f^{-1}(F(T))$ intersects $\Sigma$.

From Fig.~\ref{fig:schemQQe} it is intuitively clear that the desired property
$F(\Omega) \subseteq \Omega$ will require a number of conditions on the points $U$, $V$, $F(S)$, and $F(T)$.
So that these conditions can be expressed in a way that a computer can check,
we let $\pi_i:\mathbb{R}^2\to \mathbb{R}$, for $i = 1,2$, be the standard projections onto the axes,
$\pi_1(x,y)=x$ and $\pi_2(x,y)=y$.

\begin{figure}[b!]
\begin{center}
\includegraphics[height=5cm]{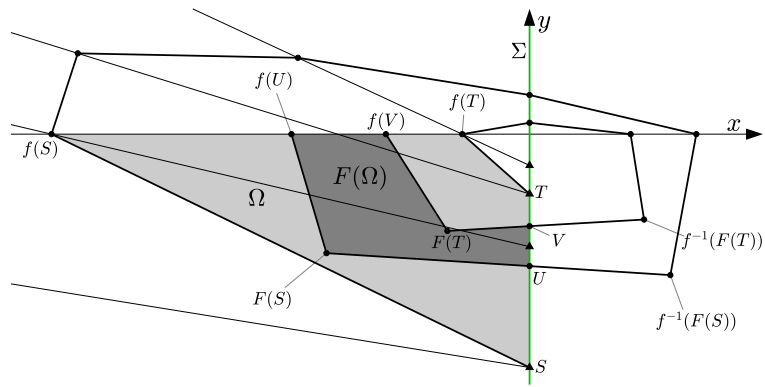}
\caption{
An example of $\Omega$ satisfying the conditions of Proposition \ref{pr:OmegaForwardInvariant}.
Here $p_{\rm min} = 2$, $p_{\rm max} = 4$, $q_{\rm min} = 2$, and $q_{\rm max} = 3$.
By iterating the line segments $S f(S)$ and $T f(T)$ under $f$ we are able to form $F(\Omega)$ as shown.
To produce this figure we used $(\tau_L,\delta_L,\tau_R,\delta_R) = (1.1,0.4,0.4,2)$.
\label{fig:schemQQe}
}
\end{center}
\end{figure}

\begin{proposition}
Suppose \eqref{eq:fSfT} is satisfied and
\begin{align}
& \pi_2(U)>\pi_2(S), \label{eq:Ucond} \\
& \pi_2(V)<\pi_2(T), \label{eq:Vcond} \\
& \pi_1(f(S))\pi_2(F(S))+\pi_2(S)\pi_1(F(S))<\pi_1(f(S))\pi_2(S), ~\textrm{and} \label{eq:FScond} \\
& \pi_1(f(T))\pi_2(F(T))+\pi_2(T)\pi_1(F(T))>\pi_1(f(T))\pi_2(T). \label{eq:FTcond}
\end{align}
Then $\Omega \subseteq \Phi_0$,
$F(\Omega) \subseteq \Omega$, and
\begin{align}
p_{\rm min} &\le \chi_L(Z) \le p_{\rm max} \,, \label{eq:chiL} \\
q_{\rm min} &\le \chi_R \left( f_L^{\chi_L(Z)}(Z) \right) \le q_{\rm max} \,, \label{eq:chiR}
\end{align}
for all $Z \in \Omega$.
\label{pr:OmegaForwardInvariant}
\end{proposition}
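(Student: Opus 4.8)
\emph{Locating $\Omega$ and proving \eqref{eq:chiL}.} The plan is to reduce the containment $F(\Omega)\subseteq\Omega$ to a short list of inequalities about the images of the edges of $\Omega$, handling $\Omega\subseteq\Phi_0$ and the bound \eqref{eq:chiL} first. I would begin by identifying where the quadrilateral $\Omega$ sits relative to the partition $D_1,\dots,D_{p^*}$ of \eqref{eq:Dp}. The corner $S$ lies on $f_L^{-p_{\rm max}}(\Sigma)$ and its image $f(S)$ on $f_L^{-(p_{\rm max}-1)}(\Sigma)$, and likewise $T$ and $f(T)$ lie on $f_L^{-p_{\rm min}}(\Sigma)$ and $f_L^{-(p_{\rm min}-1)}(\Sigma)$. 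Because the intercepts $c_i$ decrease and the slopes $m_i$ increase for $i\le p^*$, comparing the slope of each slanted edge with those of the bounding preimage lines shows that the chord $S\,f(S)$ stays weakly above $f_L^{-p_{\rm max}}(\Sigma)$ while the chord $T\,f(T)$ stays weakly below $f_L^{-(p_{\rm min}-1)}(\Sigma)$, so that $\Omega\subseteq\bigcup_{p=p_{\rm min}}^{p_{\rm max}}D_p$. Lemma~\ref{le:chiL} then gives \eqref{eq:chiL} at once, the apparent discrepancy with the corner values \eqref{eq:chiLST} being harmless because $S,T\in\Sigma$ are excluded from $\Omega$. Since \eqref{eq:chiL} shows every orbit leaves $\Pi_L$ in finitely many steps, and \eqref{eq:fSfT} together with the bound \eqref{eq:chiR} below (which in particular makes $\chi_R$ finite throughout $\Omega$) guarantees the orbit then returns to $\Phi$, we obtain $\Omega\subseteq\Phi_{\rm pre}$ and hence $\Omega\subseteq\Phi_0$.

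\emph{Reducing $F(\Omega)\subseteq\Omega$ to the chords.} On each $D_p$ the map $f$ equals the affine map $f_L$ and on $\Pi_R$ it equals $f_R$, so by Lemma~\ref{le:Fpq} the induced map $F$ is piecewise affine, its linearity pieces being the sets on which $(\chi_L,\chi_R)$ is constant and its internal breaklines being preimages of $\Sigma$. Since $\Omega$ is a convex quadrilateral whose top and bottom edges lie on $\Sigma$ and the $x$-axis --- the part of $\partial\Phi$ whose images obey the quadrant structure \eqref{eq:quadrants} --- it suffices to control the images of the two slanted chords $S\,f(S)$ and $T\,f(T)$. Following Fig.~\ref{fig:schemQQe}, I would iterate each chord forward under $f$ as a polygonal arc through $\Pi_L$ and then $\Pi_R$ until it first re-enters $\Phi$, producing two arcs that terminate at $F(S)$ and $F(T)$ and together bound $F(\Omega)$.

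\emph{Interpreting the four conditions.} The algebraic content of \eqref{eq:FScond}--\eqref{eq:FTcond} is exactly that $F(S)$ and $F(T)$ lie strictly on the interior side of $\Omega$: rewriting the left chord as $c_{p_{\rm max}}x+(c_{p_{\rm max}}+1)y=c_{p_{\rm max}}(c_{p_{\rm max}}+1)$ and the right chord analogously, and using $f(S)=(c_{p_{\rm max}}+1,0)$ and $S=(0,c_{p_{\rm max}})$, one checks that \eqref{eq:FScond} is precisely the strict inequality placing $F(S)$ on the $\Omega$-side of the left chord, and \eqref{eq:FTcond} places $F(T)$ on the $\Omega$-side of the right chord. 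Conditions \eqref{eq:Ucond}--\eqref{eq:Vcond} instead control the final segment of each returning arc: $U$ and $V$ are where the last secants, through $F(S),f^{-1}(F(S))$ and through $F(T),f^{-1}(F(T))$, cross $\Sigma$, so $\pi_2(U)>\pi_2(S)$ and $\pi_2(V)<\pi_2(T)$ force the arcs to re-enter $\Phi$ within the top edge $S\,T$ rather than spilling past the $S$- or $T$-corner. Combined with the endpoint conditions, convexity of $\Omega$ then pins both arcs between the two chords, yielding $F(\Omega)\subseteq\Omega$.

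\emph{The bound \eqref{eq:chiR} and the main obstacle.} The key observation for \eqref{eq:chiR} is that $F$ is continuous across each internal breakline where $\chi_L$ increases by one: there $f_L^{\chi_L}(Z)$ lands on $\Sigma\subset\Pi_R$, so the extra $f_L$-step on one side is absorbed as the first $f_R$-step on the other, $\chi_R$ drops by one, and the total return time $\chi_L+\chi_R$ is unchanged. This explains the asymmetric definition \eqref{eq:qMinqMax}: at the corner $S$ one has $(\chi_L,\chi_R)=(p_{\rm max}+1,q_S)$, but just inside $\Omega$ the value of $\chi_L$ drops to $p_{\rm max}$ and $\chi_R$ rises to $q_S+1$, so $\chi_R$ attains $\max[q_S,q_T]+1=q_{\rm max}$ there; tracking the entry locus $\{f_L^{\chi_L(Z)}(Z):Z\in\Omega\}$ across $\Pi_R$ and using monotonicity of the rotation count bounds $\chi_R$ from below by $q_{\rm min}$. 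I expect the main obstacle to be the promotion of this boundary control to the full containment $F(\Omega)\subseteq\Omega$: one must check that the finitely many affine pieces of $F$ assemble inside the convex region $\Omega$ without either returning arc crossing the opposite chord, and that no piece escapes through $\Sigma$ or the $x$-axis between the controlled endpoints $F(S)$ and $F(T)$. It is here that convexity of $\Omega$, the orientation-preserving piecewise-affine structure of $F$, and all four inequalities \eqref{eq:Ucond}--\eqref{eq:FTcond} must be used together.
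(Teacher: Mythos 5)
Your overall strategy matches the paper's: locate $\Omega$ inside $\bigcup_{p=p_{\rm min}}^{p_{\rm max}} D_p$ to get \eqref{eq:chiL}, iterate the chords $S\,f(S)$ and $T\,f(T)$ forward to build the boundary of the return region, read \eqref{eq:Ucond}--\eqref{eq:FTcond} as placing $U$, $V$, $F(S)$, $F(T)$ on the correct sides of $\partial\Omega$, and partition the re-entry region by preimages of $\Sigma$ under $f_R$ to get \eqref{eq:chiR}. But there is a genuine gap exactly where you flag ``the main obstacle'': you assert that ``it suffices to control the images of the two slanted chords'' and that ``convexity of $\Omega$ then pins both arcs between the two chords, yielding $F(\Omega)\subseteq\Omega$,'' and neither claim follows from what you have written. $F$ is discontinuous and piecewise affine with several pieces, so $F(\Omega)$ is a union of polygons and is not determined by the images of two boundary arcs; convexity of $\Omega$ says nothing about where the interior pieces land. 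One must rule out, for every individual $Z\in\Omega$, that some intermediate iterate $f^i(Z)$ strays outside the corridor swept by the boundary arcs --- for instance escaping $\Pi_L$ through a region not covered by your chords, re-entering $\Phi$ somewhere other than between $U$ and $V$, or never escaping $\Pi_R$ at all.

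The paper closes this gap with an explicit pointwise trapping argument that your proposal lacks. It forms the compact polygon $\Psi_L$ with vertices $S, f(S),\ldots,f^{p_{\rm max}}(S), f^{p_{\rm min}}(T),\ldots,f(T), T$ and checks that $\Omega\subset\Psi_L$ and $f(\Psi_L)\cap\Pi_L\subset\Psi_L$, so every orbit starting in $\Omega$ remains in $\Psi_L$ until it leaves $\Pi_L$, at which point it lands in the quadrilateral $\Delta$ with vertices $f^{p_{\rm max}}(S), f^{p_{\rm max}+1}(S), f^{p_{\rm min}+1}(T), f^{p_{\rm min}}(T)$. A second polygon $\Psi_R\subset\Pi_R$, built from the iterates of $S$ and $T$ together with $U$ and $V$, satisfies $\Delta\subset\Psi_R$ and $f(\Psi_R)\cap\Pi_R\subset\Psi_R$, forcing the orbit to exit $\Pi_R$ into the hexagon with vertices $U, f^{p_{\rm max}+q_S}(S), f(U), f(V), f^{p_{\rm min}+q_T}(T), V$; only then do \eqref{eq:Ucond}--\eqref{eq:FTcond} give $F(Z)\in\Omega$. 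The finite partition of $\Delta$ by the lines $f_R^{-q}(\Sigma)$, $q=q_S,\ldots,q_T$, simultaneously rules out orbits that never escape $\Pi_R$ and proves \eqref{eq:chiR} with the exact bounds $q_{\rm min}$ and $q_{\rm max}=\max[q_S,q_T]+1$ that you correctly anticipated. Without $\Psi_L$, $\Psi_R$, or some equivalent device controlling every orbit rather than just the two boundary arcs, your reduction to the chords remains an unproven assertion.
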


Conditions \eqref{eq:Ucond}--\eqref{eq:FTcond} can be interpreted more intuitively as
\begin{align}
& \text{$U$ lies above $S$,} \label{eq:Ucondint} \\
& \text{$V$ lies below $T$,} \label{eq:Vcondint} \\
& \text{$F(S)$ lies to the right of the line through $S$ and $f(S)$, and} \label{eq:FScondint} \\
& \text{$F(T)$ lies to the left of the line through $T$ and $f(T)$,} \label{eq:FTcondint}
\end{align}
respectively.
These conditions are all satisfied in Fig.~\ref{fig:schemQQe}.

\begin{proof}[Proof of Proposition \ref{pr:OmegaForwardInvariant}]
By construction, ${\displaystyle \Omega \subset \bigcup_{p = p_{\rm min}}^{p_{\rm max}} D_p}$
(note $S \notin \Omega$ whereas $f(T) \in D_{p_{\rm min}}$).
This verifies \eqref{eq:chiL} by Lemma \ref{le:chiL}.

Let $\Psi_L$ be the (compact filled) polygon formed by connecting (in order) the points
\begin{equation}
S, f(S), \ldots, f^{p_{\rm max}}(S), f^{p_{\rm min}}(T), \ldots, f(T), T,
\nonumber
\end{equation}
as shown in Fig.~\ref{fig:schemQQf}.
These points all belong to $\Pi_L$, thus $\Psi_L \subset \Pi_L$.
It is a simple exercise to show that $\Psi_L$ is simple (i.e.~its boundary has no self-intersections).
Then $f(\Psi_L) = f_L(\Psi_L)$ and since $f_L$ is affine
$f(\Psi_L)$ is the polygon with vertices
\begin{equation}
f(S), f^2(S) \ldots, f^{p_{\rm max}+1}(S), f^{p_{\rm min}+1}(T), \ldots, f^2(T), f(T).
\nonumber
\end{equation}
Observe $\Omega \subset \Psi_L$ and $f(\Psi_L) \cap \Pi_L \subset \Psi_L$.
Thus for any $Z \in \Omega$ the forward orbit of $Z$ under $f$
remains in $\Psi_L$ until escaping $\Pi_L$ by arriving at $f_L^{\chi_L(Z)}(Z)$.
Moreover $f_L^{\chi_L(Z)}(Z)$ belongs to the quadrilateral (or triangle if $p_{\rm min} = 1$) with vertices
\begin{equation}
f^{p_{\rm max}}(S), f^{p_{\rm max}+1}(S), f^{p_{\rm min}+1}(T), f^{p_{\rm min}}(T),
\nonumber
\end{equation}
call this region $\Delta$.

\begin{figure}[b!]
\begin{center}
\includegraphics[height=5cm]{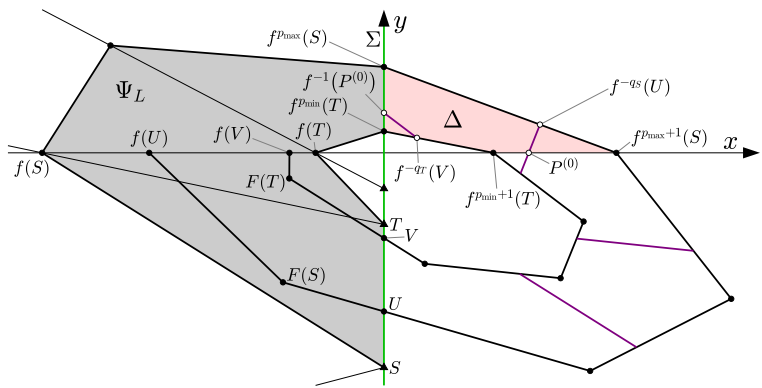}
\caption{
Elements introduced in the proof of Proposition \ref{pr:OmegaForwardInvariant}.
Here $p_{\rm min} = 2$, $p_{\rm max} = 3$, $q_S = 3$, and $q_T = 4$ (so $q_{\rm min} = 3$, and $q_{\rm max} = 5$).
The grey region is $\Psi_L$; the pink region is $\Delta$.
Parts of preimages of $\Sigma$ under $f_R$ are shown as purple line segments.
Two of these preimages divide $\Delta$ into three regions:
in right-most region $\chi_R = 3$,
in the middle region $\chi_R = 4$, and
in the left-most region $\chi_R = 5$.
To produce this figure we used $(\tau_L,\delta_L,\tau_R,\delta_R) = (1,0.6,1.2,1.2)$.
\label{fig:schemQQf}
}
\end{center}
\end{figure}

Now let $\Psi_R$ be the polygon formed by connecting the points
\begin{equation}
f^{p_{\rm max}}(S), f^{p_{\rm max}+1}(S), \ldots, f^{p_{\rm max}+q_S-1}(S), U,
V, f^{p_{\rm min}+q_T-1}(T), \ldots, f^{p_{\rm min}+1}(T), f^{p_{\rm min}}(T).
\nonumber
\end{equation}
The polygon $\Psi_R$ is simple and belongs to $\Pi_R$.
Thus $f(\Psi_R) = f_R(\Psi_R)$ is the polygon with vertices
\begin{equation}
f^{p_{\rm max}+1}(S), f^{p_{\rm max}+2}(S), \ldots, f^{p_{\rm max}+q_S}(S), f(U),
f(V), f^{p_{\rm min}+q_T}(T), \ldots, f^{p_{\rm min}+2}(T), f^{p_{\rm min}+1}(T).
\nonumber
\end{equation}
Since $\Delta \subset \Psi_R$ and $f(\Psi_R) \cap \Pi_R \subset \Psi_R$,
the forward orbit of $f_L^{\chi_L(Z)}(Z) \in \Delta$
either fails to escape $\Pi_R$ (which below we show is not possible)
or escapes $\Pi_R$ by arriving at $F(Z)$ in the polygon with vertices
\begin{equation}
U, f^{p_{\rm max}+q_S}(S), f(U), f(V), f^{p_{\rm min}+q_T}(T), V.
\label{eq:FOmega}
\end{equation}
By \eqref{eq:Ucond}--\eqref{eq:FTcond} this implies $F(Z) \in \Omega$.
Therefore, once we establish \eqref{eq:chiR}, which we do next,
we have $\Omega \subseteq \Phi_0$ and $F(\Omega) \subseteq \Omega$.

We now use preimages of $\Sigma$ under $f_R$ to partition $\Delta$ into regions of constant $\chi_R$.
For ease of explanation suppose $q_S < q_T$ as in Fig.~\ref{fig:schemQQf} (the proof can be completed similarly if $q_S \ge q_T$).
The line $f_R^{-q_S}(\Sigma)$ contains the points $f^{-q_S}(U)$ (which lies on $\partial \Delta$ --- the boundary of $\Delta$)
and $f^{-q_S}(V)$ (which lies below the $x$-axis),
thus $f_R^{-q_S}(\Sigma)$ intersects $\partial \Delta$ at $f^{-q_S}(U)$
and some point $P^{(0)}$ on the edge connecting $f^{p_{\rm max}+1}(S)$ and $f^{p_{\rm min}+1}(T)$.
Similarly if $q_S < q_T - 1$ then for each $j = 1,2,\ldots,q_T-q_S-1$,
there exists a point $P^{(j)}$ on this edge and to the left of $P^{(j-1)}$ such that
$f_R^{-(q_S+j)}(\Sigma)$ intersects $\partial \Delta$
at $f^{-1} \left( P^{(j-1)} \right)$ and $P^{(j)}$.
Also $f_R^{-q_T}(\Sigma)$ intersects $\partial \Delta$
at $f^{-1} \left( P^{(q_T-q_S-1)} \right)$ and $f^{-q_T}(V)$.

This shows that these preimages of $\Sigma$ have no intersections within $\Delta$
and partition $\Delta$ into $q_T - q_S + 2$ polygonal regions.
Call these regions $\Delta_q$, for $q = q_S,q_S+1,\ldots,q_T + 1$,
where $\Delta_q \subset \Delta$ has boundaries $f_R^{-(q-1)}(\Sigma)$ (unless $q = q_S$) and $f_R^{-q}(\Sigma)$ (unless $q = q_T + 1$),
and where $\Delta_q$ includes the first boundary but not the second.
Now choose any $W \in \Delta \setminus \Sigma$ and let $q$ be such that $W \in \Delta_q$.
By construction, $f^i(W) \in \Pi_R$ for all $i = 0,1,\ldots,q-1$ and $f^q(W) \in \Pi_R$.
Thus $\chi_R(W) = q$ and this verifies \eqref{eq:chiR}.
\end{proof}

Although $F$ is not continuous and $\Omega$ is not a trapping region (because it does not map to its interior)
these deficiencies can be removed as follows.
For all $c_{p_{\rm max}} \le y \le c_{p_{\rm min}}$,
we {\em identify} the point $(0,y)$ with its image $f(0,y)$
to endow $\Omega$ with a cylindrical topology.

\begin{proposition}
With the same assumptions as Proposition \ref{pr:OmegaForwardInvariant},
in the cylindrical topology $F$ is continuous on $\Omega$
and $F(\Omega) \subset {\rm int}(\Omega)$.
\label{pr:FContinuous}
\end{proposition}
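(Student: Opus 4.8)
The plan is to treat the two assertions—continuity and strict forward invariance—separately, in each case reducing to structure already exposed in the proof of Proposition~\ref{pr:OmegaForwardInvariant}. First I would pin down the quotient topology. The identification glues the top edge $TS$ of $\Omega$ (the part of $\Sigma$ with $c_{p_{\max}}\le y\le c_{p_{\min}}$) to the bottom edge $f(S)f(T)$ (the segment of the $x$-axis from $f(S)$ to $f(T)$) via $(0,y)\mapsto f(0,y)=(y+1,0)$, whose endpoints are exactly $f(S)$ and $f(T)$. In particular the two corner pairs $S\sim f(S)$ and $T\sim f(T)$ collapse, so the quotient is a cylinder whose two boundary circles are the left edge $Sf(S)$ and the right edge $f(T)T$, while the glued top/bottom becomes an interior circle. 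Hence ${\rm int}(\Omega)$ in the quotient is precisely $\Omega$ with the two closed segments $Sf(S)$ and $f(T)T$ deleted, and the whole problem becomes: $F$ respects the gluing, and $F(\Omega)$ misses both of these segments.

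For continuity, the key observation is that the first-return time $n(Z)$ (where $F(Z)=f^{n(Z)}(Z)$) is locally constant except on the loci where the orbit enters $\Phi$ tangentially, i.e.\ where $f^{n}(Z)\in\Sigma$. Since $f$ is globally continuous and $f_L=f_R$ on $\Sigma$, an intermediate crossing of $\Sigma$ (the $\Pi_L\to\Pi_R$ transition governing $\chi_L$) never changes $n$: it merely relabels $\chi_L\leftrightarrow\chi_L\pm1$ with $\chi_L+\chi_R$ fixed. By Proposition~\ref{pr:OmegaForwardInvariant} these labels are bounded, so only finitely many such loci occur, and the sole genuine discontinuities come from the final crossing, where $n$ jumps by one and the two one-sided limits are $f^{n_0}(Z_0)=(0,y^\ast)$ and $f^{n_0+1}(Z_0)=f(0,y^\ast)$. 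Because $F$ maps into $\overline\Omega$ and $\overline\Omega\cap\Sigma$ is the top edge, $y^\ast\in[c_{p_{\max}},c_{p_{\min}}]$, so this is exactly an identified pair; equivalently, for any $(0,y)$ on the top edge one checks directly that $F(0,y)=F(f(0,y))$ since $f(0,y)$ reaches $\Phi$ one step sooner. Thus $F$ descends to a continuous map on the cylinder.

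For the strict inclusion I would use the containment $F(\Omega)\subseteq H$, where $H$ is the hexagon \eqref{eq:FOmega} constructed in the proof of Proposition~\ref{pr:OmegaForwardInvariant}, and show that $H$, viewed in the cylinder, is disjoint from both boundary circles. Writing $\ell_S$ for the line through $S,f(S)$ and $\ell_T$ for the line through $T,f(T)$, the interior side of the left circle is $\{x$ strictly right of $\ell_S\}$ and of the right circle is $\{x$ strictly left of $\ell_T\}$. On the $S$-side of $H$, condition \eqref{eq:Ucond} places $U$ strictly right of $\ell_S$ (a point $(0,y)$ on $\Sigma$ with $y>c_{p_{\max}}$ satisfies the defining inequality of $\Omega$ strictly), condition \eqref{eq:FScond} places $F(S)$ strictly right of $\ell_S$, and the remaining vertex $f^{-1}(F(S))$ lies in $\Pi_R$; jointly these four pieces of data force the entire $S$-portion of $H$ strictly onto the interior side of $\ell_S$, away from the left circle. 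The mirror argument on the $T$-side, using \eqref{eq:Vcond} and \eqref{eq:FTcond} relative to $\ell_T$, keeps $H$ off the right circle. It is exactly the strictness of the four inequalities that upgrades the inclusion $F(\Omega)\subseteq\Omega$ to $F(\Omega)\subset{\rm int}(\Omega)$.

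The main obstacle I anticipate is the topological bookkeeping in this last step. Under the gluing the hexagon vertices coincide in pairs ($U\sim f(U)$ and $V\sim f(V)$, both $U,V$ lying on the top edge), so $H$ is no longer a convex polygon but a band wrapping across the interior circle, pinched near $[U]$ on the $S$-side and near $[V]$ on the $T$-side; one must argue about this band as a whole and confirm that both pinch regions stay strictly interior simultaneously. Extra care is needed at the collapsed corners $S=f(S)$ and $T=f(T)$, where the image could in principle sneak onto a boundary circle through a corner—and it is precisely the strict inequalities \eqref{eq:Ucond}--\eqref{eq:FTcond} that rule this out.
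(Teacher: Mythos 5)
This is essentially the paper's own argument: continuity is reduced to the single problematic event where the final entry into $\Phi$ occurs through $\Sigma\cap{\rm cl}(\Omega)$, which the identification $(0,y)\sim f(0,y)$ repairs, and the strict inclusion follows from $F(\Omega)$ lying in the polygon \eqref{eq:FOmega} together with \eqref{eq:Ucond}--\eqref{eq:FTcond} keeping that polygon off the lines through $S,f(S)$ and through $T,f(T)$. One small correction to your vertex bookkeeping: the third vertex of that polygon on the $S$-side is $f(U)$ --- which \eqref{eq:Ucond} does place strictly on the interior side of the line through $S$ and $f(S)$, since $f(U)$ lies on the $x$-axis to the right of $f(S)$ --- rather than $f^{-1}(F(S))$, and in any case mere membership of $\Pi_R$ would not by itself put a point on the correct side of that line.
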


Before giving the proof of Proposition~\ref{pr:FContinuous} it is worth sketching where problems arise and why the 
cylindrical topology is necessary. Consider two points, $X$ and $Y$ in $\Omega$ with $X\in D_p$ and $Y\in D_{p+1}$ 
and with $X$ close to $Y$. Then $f^p(X)=f_L^p(X)\in \Pi_R$ but $f^p(Y)=f_L^p(Y)\in \Pi_L$. Even so, 
$f^p(X)$ and $f^p(Y)$ are close by continuity of $f_L$ and then the continuity of $f$ implies that $f^{p+1}(X)$ 
and $f^{p+1}(Y)$ are also close. Thus the transition from $\Pi_L$ to $\Pi_R$
does not create problems for the continuity of $F$.

Suppose now that $f^n(X)$ and $f^n(Y)$ are close but on different sides of $\Sigma$ with $n>p$,
i.e.~$f^n(X)=F(X)\in \Omega$ but $f^n(Y)\in \Pi_R$. Then $f^{n+1}(Y)=F(Y)\in \Omega$
and (by continuity across $\Sigma$) is close to 
$f^{n+1}(X)$, which is not in $\Omega$ (it is in $Q_2$). However, in the cylindrical topology obtained by identifying
$\Sigma \cap {\rm cl}(\Omega )$ with its image under $f$, which is on the $x$-axis, since $f^{n+1}(X)$ and $f^{n+1}(Y)$ 
are close in the Euclidean toplogy then so is $F(X)$ and $F(Y)$ in the induced cylindrical topology.


\begin{proof}[Proof of Proposition \ref{pr:FContinuous}]
Choose any $Z \in \Omega$ and let $n \ge 1$ be as in Definition \ref{df:F}.
Let $Y \in \Omega$ be a point close to $Z$ in the cylindrical topology.
Under $f$ the forward orbit of $Y$ is close to the forward orbit of $Z$
because in the cylindrical topology points in $\Sigma$ are identified with their images under $f$.
Therefore $F(Y)$ is near $F(Z)$, establishing continuity,
unless $f^i(Z) \in \partial \Omega$ (the boundary of $\Omega$) for some $i \in \{ 1,\ldots,n-1 \}$,
in which case the forward orbit of $Y$ may return to $\Omega$ prematurely.
If $f^{n-1}(Z) \in \partial \Omega$ then $f^{n-1}(Z) \in \Sigma$
so again $F(Y)$ is near $F(Z)$ by the definition of the cylindrical topology.
But $f^i(Z) \in \partial \Omega$ for some $i \in \{ 1,\ldots,n-2 \}$ is actually not possible
because, as shown in the proof of Proposition \ref{pr:OmegaForwardInvariant},
the forward orbit of $Z$ is constrained to lie in $\Psi_L \cup \Psi_R$.

Also in the proof of Proposition \ref{pr:OmegaForwardInvariant}
we showed that $F(\Omega)$ is contained in the polygon with vertices \eqref{eq:FOmega}.
By \eqref{eq:Ucond}--\eqref{eq:FTcond}, this polygon does not intersect
the line through $S$ and $f(S)$ nor the line through $T$ and $f(T)$.
Thus $F(\Omega) \subset {\rm int}(\Omega)$ in the cylindrical topology.
\end{proof}

\section{Invariant expanding cones imply positive Lyapunov exponents}
\label{sec:Lyap}
\setcounter{equation}{0}

The induced map $F$ is piecewise-linear
and its switching manifolds are preimages of $\Sigma$ under $f$.
Let $\Xi \subset \Omega$ be the collection of all switching manifolds of $F$.
Then for any $Z \in \Omega \setminus \Xi$
there exists a neighbourhood $N$ of $Z$ such that
$F|_{N \cap \Omega} = f_R^q \circ f_L^p$
where $p$ and $q$ are as given in Lemma \ref{le:Fpq}.
In this neighbourhood $F$ is affine with $\rD F(Z) = A_R^q A_L^p$.
Indeed, $p$ and $q$ are constant
in each of the connected components of $\Omega \setminus \Xi$. 
Below $\| \cdot \|$ denotes the Euclidean norm on $\mathbb{R}^2$.

\begin{definition}
A nonempty set $C \subseteq \mathbb{R}^2$
is called a {\em cone} if $\alpha v \in C$ for all $v \in C$ and all $\alpha \in \mathbb{R}$.
For the map $F$ on $\Omega$, a cone $C$ is
\begin{enumerate}
\renewcommand{\labelenumi}{\roman{enumi})}
\item
{\em invariant} if $\rD F(Z) v \in C$ for all $v \in C$ and all $Z \in \Omega \setminus \Xi$,
\item
{\em contracting-invariant}
if $\rD F(Z) v \in {\rm int}(C) \cup \{ \bO \}$ for all $v \in C$ and all $Z \in \Omega \setminus \Xi$, and
\item
{\em expanding} if there exists $c > 1$ (an expansion factor) such that
$\| \rD F(Z) v \| \ge c \| v \|$ for all $v \in C$ and all $Z \in \Omega \setminus \Xi$.
\end{enumerate}
\label{df:iec}
\end{definition}

In this section we prove a general result on invariant expanding cones.
Later in \S\ref{sec:iecExistence} we obtain conditions for the existence of a contracting-invariant cone.
The result here can be applied to that cone because every contracting-invariant cone is also invariant.
The advantage of a contracting-invariant cone, over one that is only invariant,
is that it is robust to perturbations in $f$
(a detailed description and study of this is beyond the scope of this paper).

Assume $F(\Omega) \subseteq \Omega$ and let
\begin{equation}
\Xi_\infty = \bigcup_{i \ge 0} \left\{ Z \in \Omega \,\big|\, F^i(Z) \in \Xi \right\}.
\label{eq:Xiinfty}
\end{equation}
Note that $\Xi_\infty$ has zero Lebesgue measure.
For any $Z \in \Omega \setminus \Xi_\infty$, the Jacobian matrix $\rD F^n(Z)$ is well-defined for all $n \ge 1$.
The {\em Lyapunov exponent} for $Z$ in a direction $v \in \mathbb{R}^2$ is
\begin{equation}
\lambda(Z;v) = \lim_{n \to \infty} \frac{1}{n} \,\ln \left( \left\| \rD F^n(Z) v \right\| \right),
\label{eq:lyapExp}
\end{equation}
assuming this limit exists (sometimes the supremum limit is taken to avoid this issue).
The Lyapunov exponent characterises the asymptotic rate of separation
of the forward orbits of arbitrarily close points $Z$ and $Z + \delta v$.
A positive Lyapunov exponent for a bounded orbit is a commonly used indicator of chaos.
The following result shows that if there exists an invariant expanding cone,
then almost every $Z \in \Omega$ has a positive Lyapunov exponent
when the limit in \eqref{eq:lyapExp} exists.

\begin{proposition}
Suppose $\Omega \subseteq \Phi_0$ and $F(\Omega) \subseteq \Omega$.
Suppose there exists an invariant expanding cone $C \subseteq \mathbb{R}^2$ for $F$ on $\Omega$,
and let $c > 1$ be a corresponding expansion factor.
For all $v \in C$ and $Z \in \Omega \setminus \Xi_\infty$,
\begin{equation}
\liminf_{n \to \infty} \frac{1}{n} \,\ln \left( \left\| \rD F^n(Z) v \right\| \right) \ge \ln(c).
\label{eq:liminf}
\end{equation}
\label{pr:lyapExp}
\end{proposition}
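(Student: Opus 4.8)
The plan is to exploit the chain rule together with the two defining properties of the cone, iterating them along the forward orbit of $Z$. Fix $v \in C$ with $v \neq \bO$ (the case $v = \bO$ is excluded, as a Lyapunov exponent is only defined along a genuine direction) and fix $Z \in \Omega \setminus \Xi_\infty$. Set $v_0 = v$ and, for $k \ge 1$, define $v_k = \rD F \big( F^{k-1}(Z) \big) v_{k-1}$, so that by the chain rule $v_n = \rD F^n(Z) v$. The first thing I would establish is that this sequence is genuinely well-defined and that the inequalities of Definition~\ref{df:iec} apply at every step.

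For this, observe that because $Z \notin \Xi_\infty$ and $\Xi_\infty = \bigcup_{i \ge 0} \{ Z \in \Omega \mid F^i(Z) \in \Xi \}$, we have $F^i(Z) \notin \Xi$ for every $i \ge 0$; combined with the hypothesis $F(\Omega) \subseteq \Omega$ this gives $F^i(Z) \in \Omega \setminus \Xi$ for all $i \ge 0$. Hence each Jacobian $\rD F \big( F^{k-1}(Z) \big)$ is well-defined (equalling $A_R^q A_L^p$ on the relevant component of $\Omega \setminus \Xi$), and both the invariance and the expansion clauses may legitimately be invoked at each point $F^{k-1}(Z)$.

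Next I would run a single induction establishing simultaneously that $v_k \in C$ and $\| v_k \| \ge c^k \| v \|$ for all $k \ge 0$. The base case $k = 0$ is immediate. For the inductive step, if $v_{k-1} \in C$ then invariance at $F^{k-1}(Z) \in \Omega \setminus \Xi$ yields $v_k = \rD F \big( F^{k-1}(Z) \big) v_{k-1} \in C$, while the expanding property at the same point gives $\| v_k \| \ge c \| v_{k-1} \| \ge c \cdot c^{k-1} \| v \| = c^k \| v \|$. It is essential that $v_{k-1}$ already lies in $C$ before the expansion bound is applied, so the two clauses must be carried together through the induction rather than proved separately.

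Finally, taking logarithms in $\| \rD F^n(Z) v \| = \| v_n \| \ge c^n \| v \|$ gives $\frac{1}{n} \ln \big( \| \rD F^n(Z) v \| \big) \ge \ln(c) + \frac{1}{n} \ln \| v \|$, and since $\frac{1}{n} \ln \| v \| \to 0$ as $n \to \infty$ (recall $v \neq \bO$), passing to the $\liminf$ yields \eqref{eq:liminf}. I do not anticipate a serious obstacle here: the argument is a clean induction, and the only point requiring genuine care is the bookkeeping that confines the entire forward orbit of $Z$ to $\Omega \setminus \Xi$, which is precisely what deleting the measure-zero set $\Xi_\infty$ guarantees.
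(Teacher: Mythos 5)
Your proof is correct and follows essentially the same route as the paper's: the same induction propagating $v_k \in C$ and $\| v_k \| \ge c^k \| v \|$ along the forward orbit via the chain rule, followed by taking logarithms. You are slightly more explicit than the paper about why $F^i(Z) \in \Omega \setminus \Xi$ for all $i$ (the role of deleting $\Xi_\infty$) and about excluding $v = \bO$, but the argument is identical in substance.
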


\begin{proof}
Let $v_0 = v$ and for all $n \ge 1$ let $v_n = \rD F^n(Z) v$.
Then
\begin{equation}
v_n = \rD F \left( F^{n-1}(Z) \right) v_{n-1} \,,
\label{eq:vn}
\end{equation}
for all $n \ge 1$.
Since $C$ is invariant for $F$ on $\Omega$, and $F^{n-1}(Z) \in \Omega$ for all $n \ge 1$,
\eqref{eq:vn} implies $v_n \in C$ for all $n \ge 1$.
Moreover, $\| v_n \| \ge c \| v_{n-1} \|$ for each $n$.
Thus $\| v_n \| \ge c^n \| v_0 \|$ giving \eqref{eq:liminf}.
\end{proof}

\section{Dynamics of tangent vectors}
\label{sec:GH}
\setcounter{equation}{0}

Given $p, q \ge 1$, let $M_{p,q} = A_R^q A_L^p$.
Notice $\det(M_{p,q}) =\delta_L^p\delta_R^q> 0$ by (\ref{eq:params}).
In order to construct an invariant expanding cone
we study how vectors $v \in \mathbb{R}^2$ are transformed under multiplication by $M_{p,q}$.
To achieve this it suffices to consider unit vectors $(\cos(\theta), \sin(\theta))$, where $\theta \in [0,\pi)$,
because $v \mapsto M_{p,q} v$ is a linear map
and every $v \in \mathbb{R}^2$ is a scalar multiple of some such unit vector.

We endow $K = [0,\pi)$ with a cylindrical topology by identifying the numbers $0$ and $\pi$.
For any $\theta_0, \theta_1 \in K$ the closed interval $[\theta_0,\theta_1]$ is defined as
\begin{equation}
[\theta_0,\theta_1] = \begin{cases}
\left\{ \theta \in K \,\big|\, \theta_0 \le \theta \le \theta_1 \right\}, & \theta_0 \le \theta_1 \,, \\
\left\{ \theta \in K \,\big|\, \theta \le \theta_1 \right\} \cup
\left\{ \theta \in K \,\big|\, \theta \ge \theta_0 \right\}, & \theta_0 > \theta_1 \,.
\end{cases}
\label{eq:interval}
\end{equation}
Notice $\tan : K \to \mathbb{R} \cup \{ \infty \}$ is a bijection, thus $\tan^{-1}$ is unambiguous.
Here we have chosen to characterise vectors by their angle (or argument) $\theta$
because this is well-defined for all $v \ne \bO$.
In \cite{Si20e} we were able to characterise vectors with their slope $m = \tan(\theta)$,
and this made calculations regarding $v \mapsto M_{p,q} v$ somewhat simpler,
because in that setting a vector corresponding to $\theta = \frac{\pi}{2}$ (i.e.~infinite slope)
could not belong to an invariant expanding cone.

Multiplication by $M_{p,q}$ induces an `angle map' $G_{p,q} : K \to K$.
More precisely, $G_{p,q}(\theta)$ is the angle of the vector $M_{p,q} v$, where $v = (\cos(\theta), \sin(\theta))$.
We also define $H_{p,q} : K \to \mathbb{R}$ by $H_{p,q}(\theta) = \| M_{p,q} v \|$.
An example of these is shown in Fig.~\ref{fig:GH}.

\begin{figure}[b!]
\begin{center}
\includegraphics[height=9cm]{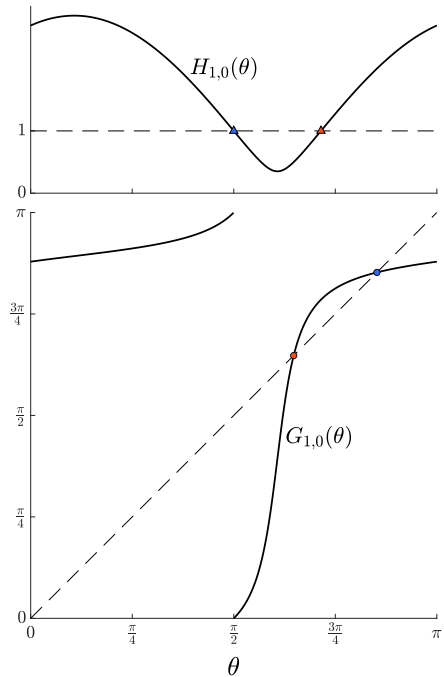}
\caption{
The angle map \eqref{eq:G} (lower plot)
and norm function \eqref{eq:H} (upper plot)
for $M_{1,0} = A_L$ with $\tau_L = 2.5$ and $\delta_L = 1$.
\label{fig:GH}
}
\end{center}
\end{figure}

In the remainder of this section we show how $G_{p,q}$ and $H_{p,q}$ can be used to establish the properties of a cone.
Given an interval $J \subset K$ of the form \eqref{eq:interval}, define the cone
\begin{equation}
C_J = \left\{ \alpha \big( \cos(\theta), \sin(\theta) \big) \,\big|\,
\alpha \in \mathbb{R}, \theta \in J \right\}.
\label{eq:CJ}
\end{equation}

\begin{lemma}
Suppose $\Omega$ satisfies the conditions of Proposition \ref{pr:OmegaForwardInvariant} and let
\begin{equation}
\Gamma = \left\{ (p,q) \,\big|\, p_{\rm min} \le p \le p_{\rm max}, q_{\rm min} \le q \le q_{\rm max} \right\}.
\label{eq:allpandq}
\end{equation}
For $F$ on $\Omega$, the cone $C_J$ is
\begin{enumerate}
\renewcommand{\labelenumi}{\roman{enumi})}
\item
invariant if $G_{p,q}(J) \subseteq J$ for all $(p,q) \in \Gamma$,
\item
contracting-invariant if $G_{p,q}(J) \subset {\rm int}(J)$ for all $(p,q) \in \Gamma$, and
\item
expanding if $H_{p,q}(\theta) > 1$ for all $\theta \in J$ and all $(p,q) \in \Gamma$.
\end{enumerate}
\label{le:CJ}
\end{lemma}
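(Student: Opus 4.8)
The plan is to reduce all three statements to the corresponding pointwise conditions on the one-matrix maps $G_{p,q}$ and $H_{p,q}$, with the only genuinely delicate point being the passage from a pointwise inequality to the uniform expansion factor demanded by Definition \ref{df:iec}. First I would record the reduction prepared in \S\ref{sec:Lyap}: for any $Z \in \Omega \setminus \Xi$ one has $\rD F(Z) = M_{p,q}$ with $p = \chi_L(Z)$ and $q = \chi_R\big(f_L^{p}(Z)\big)$, and by the bounds \eqref{eq:chiL}--\eqref{eq:chiR} of Proposition \ref{pr:OmegaForwardInvariant} this pair $(p,q)$ lies in $\Gamma$. Hence the set of Jacobians that actually occur on $\Omega \setminus \Xi$ is contained in the finite family $\{ M_{p,q} \mid (p,q) \in \Gamma \}$, and it suffices to verify each cone property for every matrix in this family.

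Next I would set up the dictionary between vectors and angles. Since $\det(M_{p,q}) = \delta_L^p \delta_R^q > 0$ each $M_{p,q}$ is invertible, so for the unit vector $u = (\cos(\theta),\sin(\theta))$ the image $M_{p,q} u$ is nonzero and, directly from the definitions, equals $H_{p,q}(\theta)\big( \cos(G_{p,q}(\theta)), \sin(G_{p,q}(\theta)) \big)$. Writing a general $v \in C_J$ as $v = \alpha u$ with $\theta \in J$ and $\alpha \in \mathbb{R}$, linearity gives $M_{p,q} v = \alpha H_{p,q}(\theta)\big( \cos(G_{p,q}(\theta)), \sin(G_{p,q}(\theta)) \big)$; because angles live in $K = [0,\pi)$ under the identification $0 \sim \pi$, the sign of $\alpha$ is irrelevant and $M_{p,q} v$ has angle $G_{p,q}(\theta)$ whenever $v \ne \bO$.

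For parts i) and ii) I would argue with this dictionary. If $G_{p,q}(J) \subseteq J$ then for $v \ne \bO$ the angle of $M_{p,q} v$ lies in $J$, so $M_{p,q} v \in C_J$, while $v = \bO$ is immediate since $\bO \in C_J$; this gives invariance. For contracting-invariance I would first observe that in the cylindrical topology ${\rm int}(C_J) = \{ \alpha(\cos(\theta),\sin(\theta)) \mid \alpha \ne 0,\ \theta \in {\rm int}(J) \}$, so the hypothesis $G_{p,q}(J) \subset {\rm int}(J)$ drives the angle of $M_{p,q} v$ into ${\rm int}(J)$ for $v \ne \bO$, placing $M_{p,q} v$ in ${\rm int}(C_J)$; together with the contribution $\bO$ from $v = \bO$ this matches Definition \ref{df:iec} ii).

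The part I expect to require the most care is iii), since Definition \ref{df:iec} demands a single constant $c > 1$ valid simultaneously for all $Z$. From the dictionary $\| M_{p,q} v \| = |\alpha|\, H_{p,q}(\theta) = H_{p,q}(\theta)\,\| v \|$, so the pointwise hypothesis $H_{p,q}(\theta) > 1$ is not in itself enough and I must manufacture a uniform bound. Here I would use that $J$ is a closed arc of the circle $K$, hence compact, that each $H_{p,q}$ is continuous on $K$ (the identification $0 \sim \pi$ is consistent with $H_{p,q}$ because $(\cos(\pi),\sin(\pi)) = -(\cos(0),\sin(0))$ and $M_{p,q}$ is linear), and that $\Gamma$ is finite. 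Setting $c = \min_{(p,q) \in \Gamma} \min_{\theta \in J} H_{p,q}(\theta)$, the extreme-value theorem guarantees the minimum is attained, and the hypothesis forces $c > 1$; then $\| M_{p,q} v \| = H_{p,q}(\theta)\,\| v \| \ge c\,\| v \|$ for all $v \in C_J$ and all $(p,q) \in \Gamma$, which is precisely expansion with factor $c$.
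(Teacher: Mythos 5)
Your proposal is correct and follows essentially the same route as the paper's proof: reduce to the finite family $\{M_{p,q}\}_{(p,q)\in\Gamma}$ via Proposition \ref{pr:OmegaForwardInvariant}, then translate each cone property into the corresponding statement about $G_{p,q}$ and $H_{p,q}$ through the angle/norm dictionary. The one place you go beyond the paper is part iii), where you supply the compactness-plus-finiteness argument needed to upgrade the pointwise inequality $H_{p,q}(\theta)>1$ to a uniform expansion factor $c>1$; the paper's proof leaves this step implicit, so your extra care is a genuine (if small) improvement rather than a deviation.
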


\begin{proof}
Choose any $Z \in \Omega \setminus \Xi$;
then by Proposition \ref{pr:OmegaForwardInvariant}, $\rD F(Z) = A_R^q A_L^p$ for some $(p,q) \in \Gamma$.
Choose any $v \in C_J$;
then $v = \alpha \left( \cos(\theta), \sin(\theta) \right)$ for some $\alpha \in \mathbb{R}$ and $\theta \in J$.
Then $\rD F(Z) v = \beta \left( \cos(\phi), \sin(\phi) \right)$
where $\phi = G_{p,q}(\theta)$ and $\beta = H_{p,q}(\theta)$.
For (i) we have $\phi \in J$ so $\rD F(Z) v \in C_J$
and thus $C_J$ is invariant. 
For (ii) we have $\phi \in {\rm int}(J)$ so $\rD F(Z) v \in {\rm int}(C_J) \cup \{ \bO \}$
and thus $C_J$ is contracting-invariant.
Finally for (iii) we have $\beta > 1$ so $\| \rD F(Z) v \| > 1$
and thus $C_J$ is expanding.
\end{proof}

\section{Properties of $G_{p,q}$}
\label{sec:G}
\setcounter{equation}{0}

By writing
\begin{equation}
M_{p,q} = A_R^q A_L^p = \begin{bmatrix} a & b \\ c & d \end{bmatrix},
\label{eq:M}
\end{equation}
we have
\begin{align}
\tan \left( G_{p,q}(\theta) \right) &= \frac{c \cos(\theta) + d \sin(\theta)}{a \cos(\theta) + b \sin(\theta)},
\label{eq:G} \\
H_{p,q}(\theta) &=
\sqrt{\left( a \cos(\theta) + b \sin(\theta) \right)^2 +
\left( c \cos(\theta) + d \sin(\theta) \right)^2}.
\label{eq:H}
\end{align}
We provide the following result without proof.

\begin{lemma}
The map $G_{p,q}$ is a degree-one circle map on $K$ with
\begin{equation}
\frac{d G_{p,q}}{d \theta} = \frac{\det(M_{p,q})}{(H_{p,q}(\theta))^2}.
\label{eq:dGdtheta}
\end{equation}
\label{le:G1}
\end{lemma}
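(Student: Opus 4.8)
The plan is to view $G_{p,q}$ as the map that $M_{p,q}$ induces on directions (lines through the origin), which is precisely the circle $K$, and to extract both assertions from the elementary calculus of the argument function. First I would set $X(\theta) = a\cos(\theta) + b\sin(\theta)$ and $Y(\theta) = c\cos(\theta) + d\sin(\theta)$, the components of $M_{p,q}(\cos(\theta),\sin(\theta))^T$, so that by \eqref{eq:H} we have $H_{p,q}(\theta)^2 = X(\theta)^2 + Y(\theta)^2$ and $\tan(G_{p,q}(\theta)) = Y(\theta)/X(\theta)$ as in \eqref{eq:G}. Since $\det(M_{p,q}) = \delta_L^p \delta_R^q > 0$ by \eqref{eq:params}, the matrix is invertible, so $(X(\theta),Y(\theta)) \ne \bO$ and hence $H_{p,q}(\theta) > 0$ for every $\theta$. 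This is what makes $G_{p,q}(\theta) = \arg(X(\theta),Y(\theta)) \in K$ well-defined and smooth, including at the angles where $X(\theta) = 0$ (where $G_{p,q} = \tfrac{\pi}{2}$), provided one differentiates the argument directly rather than \eqref{eq:G}.

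For the derivative I would use the standard formula for the rate of change of the argument along a plane curve,
\begin{equation}
G_{p,q}'(\theta) = \frac{X(\theta) Y'(\theta) - Y(\theta) X'(\theta)}{X(\theta)^2 + Y(\theta)^2},
\nonumber
\end{equation}
whose denominator is exactly $H_{p,q}(\theta)^2$. With $X'(\theta) = -a\sin(\theta) + b\cos(\theta)$ and $Y'(\theta) = -c\sin(\theta) + d\cos(\theta)$, the numerator is the Wronskian $XY' - YX'$; on expanding it, every $\sin(\theta)\cos(\theta)$ cross term cancels and the surviving terms combine through $\cos^2(\theta) + \sin^2(\theta) = 1$ into the constant $ad - bc = \det(M_{p,q})$. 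This is a routine calculation and yields \eqref{eq:dGdtheta}. Equivalently one may differentiate \eqref{eq:G} using $\sec^2(G_{p,q}) = H_{p,q}^2/X^2$, valid away from the vertical directions, and extend to all of $K$ by continuity.

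For the degree-one statement I would argue that $G_{p,q}$ is an orientation-preserving homeomorphism of $K$. Injectivity is immediate, since $M_{p,q}$ is invertible and therefore sends distinct lines through the origin to distinct lines. A continuous injection of the circle $K$ into itself is a homeomorphism onto $K$ and so has degree $\pm 1$; and because the formula just obtained gives $G_{p,q}'(\theta) = \det(M_{p,q})/H_{p,q}(\theta)^2 > 0$ for all $\theta$, the map $G_{p,q}$ is orientation-preserving, so its degree is $+1$.

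I expect the main obstacle to be bookkeeping with the cylindrical topology on $K = [0,\pi)$: one must verify that $\arg(X,Y)$ really is a smooth circle map across the identification of $0$ with $\pi$ and at the vertical directions where \eqref{eq:G} is singular, and that the degree is exactly $+1$ rather than merely $\pm 1$. By contrast, the trigonometric collapse of the Wronskian to $\det(M_{p,q})$ is elementary.
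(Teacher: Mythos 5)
Your proposal is correct, and it fills in exactly the argument the paper leaves implicit: the paper states this lemma without proof, remarking only that the degree-one property is clear from the definition (since $\det(M_{p,q})>0$) and that \eqref{eq:dGdtheta} follows directly from \eqref{eq:G} and \eqref{eq:H}. Your Wronskian computation $XY'-YX'=ad-bc$ and the orientation-preserving homeomorphism argument are the standard way to make both claims rigorous, and they check out.
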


That $G_{p,q}$ is a degree-one circle map is clear from the way it is defined (recall $\det(M_{p,q}) > 0$),
while \eqref{eq:dGdtheta} can be obtained directly from \eqref{eq:G} and \eqref{eq:H}.

From \eqref{eq:G} we see that fixed points of $G_{p,q}$ satisfy
\begin{equation}
b \tan^2(\theta) + (a-d) \tan(\theta) - c = 0.
\label{eq:Gfps}
\end{equation}
Note that $\theta$ is a fixed point of $G_{p,q}$
if and only if $(\cos(\theta), \sin(\theta))$ is an eigenvector of $M_{p,q}$.

\begin{lemma}
If
\begin{equation}
\det(M_{p,q}) < \tfrac{1}{4} \,{\rm trace}(M_{p,q})^2,
\label{eq:Mcondition}
\end{equation}
then $G_{p,q}$ has exactly two fixed points.
At one fixed point, $\theta_{p,q}^s$, we have $\frac{d G_{p,q}}{d \theta} = \eta$,
for some $\eta \in (0,1)$, while at the other fixed point, $\theta_{p,q}^u$,
we have $\frac{d G_{p,q}}{d \theta} = \frac{1}{\eta}$.
\label{le:G2}
\end{lemma}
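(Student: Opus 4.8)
The plan is to convert both claims into statements about the eigenvalues and eigenvectors of $M_{p,q}$, using the correspondence recorded just after \eqref{eq:Gfps}: a point $\theta \in K$ is a fixed point of $G_{p,q}$ if and only if $(\cos\theta,\sin\theta)$ is an eigenvector of $M_{p,q}$. With this in hand the lemma becomes an exercise in linear algebra combined with the derivative formula \eqref{eq:dGdtheta}.

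First I would note that the eigenvalues $\lambda_1,\lambda_2$ of $M_{p,q}$ are the roots of $\lambda^2 - {\rm trace}(M_{p,q})\,\lambda + \det(M_{p,q})$, whose discriminant is ${\rm trace}(M_{p,q})^2 - 4\det(M_{p,q})$. Condition \eqref{eq:Mcondition} says exactly that this discriminant is positive, so $\lambda_1$ and $\lambda_2$ are real and distinct. Since $\det(M_{p,q}) = \delta_L^p\delta_R^q > 0$ (as noted at the start of \S\ref{sec:GH}), their product is positive, hence they share the same sign and in particular are both nonzero. Because $\lambda_1 \neq \lambda_2$, the matrix $M_{p,q}$ has two distinct one-dimensional eigenspaces; each determines a single point of $K = [0,\pi)$, and by the eigenvector characterisation these are precisely the fixed points of $G_{p,q}$. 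This yields exactly two fixed points, which I will label $\theta_1$ and $\theta_2$ according to the eigenvalues $\lambda_1,\lambda_2$.

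For the derivative claim I would evaluate \eqref{eq:dGdtheta} at a fixed point. If $\theta^*$ corresponds to eigenvalue $\lambda$, then with $v = (\cos\theta^*,\sin\theta^*)$ a unit vector we have $H_{p,q}(\theta^*) = \|M_{p,q} v\| = |\lambda|\,\|v\| = |\lambda|$ directly from \eqref{eq:H}. Substituting into \eqref{eq:dGdtheta} gives $\frac{dG_{p,q}}{d\theta}\big|_{\theta^*} = \det(M_{p,q})/\lambda^2 = \lambda_1\lambda_2/\lambda^2$, so at $\theta_1$ the derivative is $\lambda_2/\lambda_1$ and at $\theta_2$ it is $\lambda_1/\lambda_2$. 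These are reciprocals, both strictly positive (the eigenvalues share a sign), and distinct since $\lambda_1 \neq \lambda_2$; hence one lies in $(0,1)$ and the other is its reciprocal. Defining $\eta$ to be the ratio lying in $(0,1)$, calling the associated fixed point $\theta_{p,q}^s$ and the other $\theta_{p,q}^u$, completes the argument.

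I do not expect a serious obstacle here, but the step requiring the most care is the \emph{exactly two} count: one must confirm that distinct real eigenvalues of a $2\times 2$ matrix give genuinely distinct one-dimensional eigenspaces (immediate from linear independence of eigenvectors for distinct eigenvalues) and that the map from eigendirections to points of the cylinder $K$ is a bijection, so that no fixed point is missed or double-counted. Invoking the stated eigenvector--fixed-point equivalence sidesteps the case analysis that a direct study of the quadratic \eqref{eq:Gfps} (e.g.\ the degenerate possibilities $b = 0$ or a fixed point at $\theta = \tfrac{\pi}{2}$) would otherwise require. As a consistency check, positivity of both derivatives also follows from $G_{p,q}$ being a degree-one circle map, since \eqref{eq:dGdtheta} is positive everywhere.
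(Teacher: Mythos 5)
Your proof is correct, and it reaches the same destination as the paper's --- both arguments reduce the lemma to the eigenstructure of $M_{p,q}$ and both identify the derivative at the fixed points as the eigenvalue ratios $\lambda_2/\lambda_1$ and $\lambda_1/\lambda_2$ --- but your execution is genuinely different and somewhat cleaner. The paper works in coordinates: it substitutes $\tan(\theta) = \frac{\lambda-a}{b}$ into the quadratic \eqref{eq:Gfps} to recover the characteristic equation, derives the $\tan$-based derivative formula \eqref{eq:dGdtheta2}, and must then treat $b=0$ as a separate case (where one fixed point sits at $\theta = \frac{\pi}{2}$ and the quadratic degenerates). You instead invoke the fixed-point/eigenvector correspondence directly, count fixed points by counting eigendirections (distinct real eigenvalues give exactly two one-dimensional eigenspaces, each meeting $K$ in one point), and get the derivative by observing that $H_{p,q}(\theta^*) = |\lambda|$ at a fixed point so that \eqref{eq:dGdtheta} immediately yields $\det(M_{p,q})/\lambda^2$. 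This coordinate-free route sidesteps the $b=0$ case split entirely and makes the positivity and reciprocity of the two derivatives transparent (both follow from $\det(M_{p,q}) = \lambda_1\lambda_2 > 0$ and $\lambda_1 \neq \lambda_2$). What the paper's version buys in exchange is an explicit closed form for $\theta_{p,q}^s$ and $\theta_{p,q}^u$ in terms of $a$, $b$, and the eigenvalues, which is what the numerical algorithm in \S\ref{sec:algorithm} actually needs to compute; your argument establishes existence and the derivative values but would need the same formulas appended for implementation.
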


Note that $\theta_{p,q}^s$ is a stable fixed point of $G_{p,q}$
while $\theta_{p,q}^u$ is an unstable fixed point of $G_{p,q}$.

\begin{proof}
By \eqref{eq:Mcondition}, $M_{p,q}$ has eigenvalues $\lambda_1, \lambda_2 \in \mathbb{R}$ with $|\lambda_1| > |\lambda_2|$.
First suppose $b \ne 0$.
Then with $\tan(\theta) = \frac{\lambda - a}{b}$ the fixed point equation \eqref{eq:Gfps}
reduces to the characteristic equation $\det \left( \lambda I - M_{p,q} \right) = 0$.
Thus $\theta_{p,q}^s = \tan^{-1} \left( \frac{\lambda_1 - a}{b} \right)$
and $\theta_{p,q}^u = \tan^{-1} \left( \frac{\lambda_2 - a}{b} \right)$
are fixed points of $G_{p,q}$.
Since \eqref{eq:Gfps} is quadratic in $\tan(\theta)$ and $\tan^{-1} : \mathbb{R} \to K$ is one-to-one,
these are the only fixed points of $G_{p,q}$.
From \eqref{eq:G}, \eqref{eq:H}, and \eqref{eq:dGdtheta} we obtain
\begin{equation}
\frac{d G_{p,q}}{d \theta} = \frac{\det(M_{p,q}) \left( 1 + \tan^2(\theta) \right)}
{\left( a + b \tan(\theta) \right)^2 \left( 1 + \tan^2 \left( G_{p,q}(\theta) \right) \right)}.
\label{eq:dGdtheta2}
\end{equation}
By evaluating \eqref{eq:dGdtheta2} at $\theta_{p,q}^s$ we obtain
$\frac{d G_{p,q}}{d \theta} \left( \theta_{p,q}^s \right) = \frac{\lambda_2}{\lambda_1}$,
where we have also substituted $\det(M_{p,q}) = \lambda_1 \lambda_2$.
Thus $\eta = \frac{\lambda_2}{\lambda_1}$ and indeed $\eta \in (0,1)$.
Similarly $\frac{d G_{p,q}}{d \theta} \left( \theta_{p,q}^u \right) = \frac{\lambda_1}{\lambda_2} = \frac{1}{\eta}$.

Finally if $b = 0$ then $a = \lambda_1$ and $d = \lambda_2$, or vice-versa.
By \eqref{eq:Gfps} the fixed points of $G_{p,q}$ are $\frac{\pi}{2}$ and $\tan^{-1} \big( \frac{c}{a - d} \big)$
and again from \eqref{eq:dGdtheta2} we see that at these points $\frac{d G_{p,q}}{d \theta}$
has the values $\frac{\lambda_2}{\lambda_1}$ and $\frac{\lambda_1}{\lambda_2}$.
\end{proof}

\section{Existence of an invariant expanding cone}
\label{sec:iecExistence}
\setcounter{equation}{0}

In this section we use the stable fixed points of the angle maps $G_{p,q}$ to construct an interval $J$
for which, if certain conditions are satisfied, the corresponding cone $C_J$ is invariant.
We also show that $J$ can be enlarged slightly to obtain a cone that is contracting-invariant.
Finally we characterise the expansion condition $H_{p,q}(\theta) > 1$.

Let $1 \le p_{\rm min} < p_{\rm max}$ and $1 \le q_{\rm min} < q_{\rm max}$ be given and
suppose \eqref{eq:Mcondition} is satisfied for all $(p,q) \in \Gamma$.

\begin{definition}
If there exists a closed interval $J \subset K$
such that $\theta_{p,q}^s \in J$ and $\theta_{p,q}^u \notin J$
for all $(p,q) \in \Gamma$,
then we say that the fixed points of the $G_{p,q}$ are {\em unmixed}.
\label{df:unmixed}
\end{definition}

If the fixed points are unmixed, then the smallest such $J$ (really the intersection of all such $J$)
is an interval with stable fixed points as its endpoints.
That is,
\begin{equation}
J = \left[ \theta_{p_1,q_1}^s, \theta_{p_2,q_2}^s \right],
\label{eq:J}
\end{equation}
for some $(p_1,q_1), (p_2,q_2) \in \Gamma$.
Fig.~\ref{fig:GHmany} shows an example.

\begin{figure}[b!]
\begin{center}
\includegraphics[height=9cm]{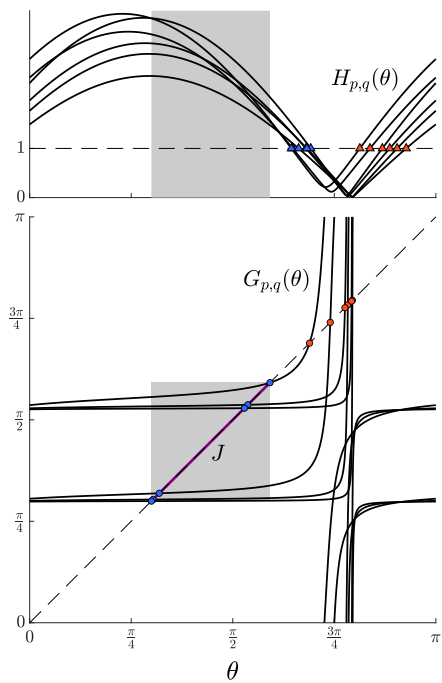}
\caption{
An example of the angle maps $G_{p,q}$ (lower plot)
and the norm functions $H_{p,q}$ (upper plot) for all $(p,q) \in \Gamma$.
This figure was generated using $(\tau_L,\delta_L,\tau_R,\delta_R) = (1,0.2,-1.2,2)$ and
$p_{\rm min} = 1$, $p_{\rm max} = 3$, $q_{\rm min} = 1$, and $q_{\rm max} = 2$.
In this case the fixed points of the $G_{p,q}$ are unmixed
and the interval $J$, equation \eqref{eq:J}, is shown in the lower plot.
From the upper plot notice that $H_{p,q}(\theta) > 1$ for all $\theta \in J$ and all $(p,q) \in \Gamma$
as required for $C_J$ to be expanding (see Lemma \ref{le:CJ}).
\label{fig:GHmany}
}
\end{center}
\end{figure}

\begin{proposition}
Suppose \eqref{eq:Mcondition} is satisfied for all $(p,q) \in \Gamma$ and the fixed points of the $G_{p,q}$ are unmixed.
With $J$ given by \eqref{eq:J}, $G_{p,q}(J) \subseteq J$ for all $(p,q) \in \Gamma$.
Moreover there exists $\ee > 0$ such that for
$J_\ee = \left[ \theta_{p_1,q_1}^s - \ee, \theta_{p_2,q_2}^s + \ee \right]$ we have
$G_{p,q}(J_\ee) \subset {\rm int}(J_\ee)$ for all $(p,q) \in \Gamma$.
\label{pr:J}
\end{proposition}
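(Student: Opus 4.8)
The plan is to exploit the fact, established in Lemmas \ref{le:G1} and \ref{le:G2}, that each $G_{p,q}$ is an orientation-preserving circle homeomorphism (its derivative \eqref{eq:dGdtheta} is positive because $\det(M_{p,q})>0$) possessing exactly two fixed points: an attracting point $\theta_{p,q}^s$ and a repelling point $\theta_{p,q}^u$. On a circle such a map has ``north--south'' dynamics: every point other than $\theta_{p,q}^u$ is moved strictly toward $\theta_{p,q}^s$ along the arc joining them that avoids $\theta_{p,q}^u$. I would make this precise by passing to a lift $\tilde G_{p,q}:\mathbb{R}\to\mathbb{R}$ that is increasing and satisfies $\tilde G_{p,q}(x+\pi)=\tilde G_{p,q}(x)+\pi$, normalised so that it fixes $\theta_{p,q}^s$. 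Since $\tilde G_{p,q}-\mathrm{id}$ is continuous, vanishes only at the two fixed points within a fundamental domain, and has $\tilde G_{p,q}'(\theta_{p,q}^s)=\eta\in(0,1)$ by Lemma \ref{le:G2}, the difference $\tilde G_{p,q}(x)-x$ is positive immediately to the left of $\theta_{p,q}^s$ and negative immediately to its right, with no further sign change before $\theta_{p,q}^u$.

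To organise this on $K$ I would fix convenient coordinates. Because the fixed points are unmixed, $J$ of \eqref{eq:J} is a proper closed arc, hence of length strictly less than $\pi$, and I may choose a lift in which $J=[\alpha,\beta]$ with $\alpha=\theta_{p_1,q_1}^s$, $\beta=\theta_{p_2,q_2}^s$ and $0\le\beta-\alpha<\pi$. The unmixed hypothesis then reads: every $\theta_{p,q}^s$ lies in $[\alpha,\beta]$ and every $\theta_{p,q}^u$ lies in the complementary arc $(\beta,\alpha+\pi)$. Fixing $(p,q)\in\Gamma$ and writing $s=\theta_{p,q}^s$, $u=\theta_{p,q}^u$, the sign analysis above gives $x<\tilde G_{p,q}(x)<s$ for $x\in(u-\pi,s)$ and $s<\tilde G_{p,q}(x)<x$ for $x\in(s,u)$. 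Since $u-\pi<\alpha$ and $u>\beta$, the whole arc $J=[\alpha,\beta]$ lies in $(u-\pi,u)$, so for every $x\in J$ the image $\tilde G_{p,q}(x)$ lies between $x$ and $s$, both of which are in $[\alpha,\beta]$; hence $G_{p,q}(J)\subseteq J$. As $(p,q)$ was arbitrary this is the first assertion.

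For the strict statement I would choose $\ee$ uniformly over the finite set $\Gamma$. Each $\theta_{p,q}^u$ sits at positive distance from $\partial J$ inside the complementary arc, so I can pick $\ee>0$ small enough that $J_\ee=[\alpha-\ee,\beta+\ee]$ is still a proper arc and still excludes every $\theta_{p,q}^u$; concretely $\ee<\min_{(p,q)\in\Gamma}\min(\theta_{p,q}^u-\beta,\,\alpha+\pi-\theta_{p,q}^u)$ together with $\beta-\alpha+2\ee<\pi$. Repeating the estimate on $J_\ee$, for any $x\in J_\ee$ with $x\neq s$ the image $\tilde G_{p,q}(x)$ lies strictly between $x$ and $s$; since $s\in[\alpha,\beta]\subset\mathrm{int}(J_\ee)$ this forces $\tilde G_{p,q}(x)\in(\alpha-\ee,\beta+\ee)=\mathrm{int}(J_\ee)$, while for $x=s$ we simply have $\tilde G_{p,q}(s)=s\in\mathrm{int}(J_\ee)$. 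Thus $G_{p,q}(J_\ee)\subset\mathrm{int}(J_\ee)$ for every $(p,q)\in\Gamma$, as required.

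The routine parts are the sign computations; the step requiring most care is the bookkeeping forced by the cylindrical topology on $K$. I would need to verify that $J$, and then $J_\ee$, really is a proper arc of length less than $\pi$, so that a single lift places both $s$ and $u$ in one fundamental domain and the interval notation of \eqref{eq:interval} does not wrap, and to confirm that the relevant arc from $x$ to $s$ is the one lying inside $J$ rather than the one through $u$. The unmixed hypothesis is exactly what rules out the degenerate possibility that a stable fixed point of one map coincides with an unstable fixed point of another, guaranteeing the positive separation needed to pass from the closed containment $G_{p,q}(J)\subseteq J$ to the open containment $G_{p,q}(J_\ee)\subset\mathrm{int}(J_\ee)$; making that separation uniform over the finite set $\Gamma$ is what allows a single $\ee$ to suffice.
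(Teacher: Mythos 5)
Your proof is correct and rests on the same ingredients as the paper's own argument: Lemma \ref{le:G1} (a monotone, degree-one circle map) and Lemma \ref{le:G2} (exactly one attracting and one repelling fixed point), which give the north--south dynamics that trap $J$ between the stable fixed points while the unstable ones sit in the complementary arc. The only organisational difference is in the second assertion: the paper produces $\ee$ by a case analysis at the two endpoints of $J$ (using the derivative bound at $\theta_{p,q}^s$ when it coincides with an endpoint, and continuity otherwise), whereas you choose $\ee$ by the distance from $\partial J$ to the unstable fixed points and simply rerun the betweenness argument on $J_\ee$ --- both are valid, and yours avoids the endpoint case split.
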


\begin{proof}
Choose any $(p,q) \in \Gamma$.
Then $\theta_{p,q}^s \in J$.
Since $\frac{d G_{p,q}}{d \theta} \left( \theta_{p,q}^s \right) \in (0,1)$ (by Lemma \ref{le:G2})
for any sufficiently small interval $I$ containing $\theta_{p,q}^s$ we have $G_{p,q}(I) \subseteq I$.
Since $\frac{d G_{p,q}}{d \theta} > 0$ on $K$ (by Lemma \ref{le:G1})
$G_{p,q}(I) \subseteq I$ remains true when $I$ is enlarged to any interval containing no other fixed points of $G_{p,q}$.
Since the fixed points are unmixed, $J$ is such an interval.
That is, $G_{p,q}(J) \subseteq J$.

If $\theta_{p,q}^s = \theta_{p_1,q_1}^s$
then $\frac{d G_{p,q}}{d \theta} \left( \theta_{p,q}^s \right) \in (0,1)$ implies there exists $\ee_{p,q}^- > 0$ such that
\begin{equation}
G_{p,q} \left( \left[ \theta_{p_1,q_1}^s - \ee, \theta_{p_1,q_1}^s \right] \right) \subset
{\rm int} \left( \left[ \theta_{p_1,q_1}^s - \ee, \theta_{p_2,q_2}^s \right] \right),
\label{eq:eeMinus}
\end{equation}
for all $\ee \in \left( 0, \ee_{p,q}^- \right]$.
If instead $\theta_{p,q}^s \ne \theta_{p_1,q_1}^s$
then $G_{p,q} \left( \theta_{p_1,q_1}^s \right) \in {\rm int}(J)$ so again there exists $\ee_{p,q}^- > 0$ such that
\eqref{eq:eeMinus} holds for all $\ee \in \left( 0, \ee_{p,q}^- \right]$.
Similarly there exists $\ee_{p,q}^+ > 0$ such that 
\begin{equation}
G_{p,q} \left( \left[ \theta_{p_2,q_2}^s, \theta_{p_2,q_2}^s + \ee \right] \right) \subset
{\rm int} \left( \left[ \theta_{p_1,q_1}^s, \theta_{p_2,q_2}^s + \ee \right] \right),
\label{eq:eePlus}
\end{equation}
for all $\ee \in \left( 0, \ee_{p,q}^+ \right]$.
Then $G_{p,q}(J_\ee) \subset {\rm int}(J_\ee)$ for all $(p,q) \in \Gamma$ with
$\ee = \underset{(p,q) \in \Gamma}{\min} \left( \min \left[ \ee_{p,q}^-, \ee_{p,q}^+ \right] \right)$.
\end{proof}

In order for $C_J$ to be expanding
we require $H_{p,q}(\theta) > 1$ for all $\theta \in J$ and all $(p,q) \in \Gamma$.
Solutions to $H_{p,q}(\theta) = 1$ satisfy
\begin{equation}
\left( b^2 + d^2 - 1 \right) \tan^2(\theta) + 2 (a b + c d) \tan(\theta) + \left( a^2 + c^2 - 1 \right) = 0,
\label{eq:Hsolns}
\end{equation}
so if $b^2 + d^2 - 1 \ne 0$, as is generically the case,
then \eqref{eq:Hsolns} is quadratic in $\tan(\theta)$
and consequently $H_{p,q}(\theta) = 1$ has at most two solutions on $K$.
When $H_{p,q}(\theta) = 1$ has exactly two solutions on $K$,
$H_{p,q}(\theta)$ is decreasing at one solution, call it $\theta_{p,q}^{\rm dec}$,
and increasing at the other solution, call it $\theta_{p,q}^{\rm inc}$.
Then $H_{p,q}(\theta) \le 1$ if and only if $\theta \in \left[ \theta_{p,q}^{\rm dec}, \theta_{p,q}^{\rm inc} \right]$
and so we have the following result.

\begin{lemma}
Suppose $H_{p,q}(\theta) = 1$ has exactly two solutions.
Then $H_{p,q}(\theta) > 1$ for all $\theta \in J$ if and only if
\begin{equation}
J \cap \left[ \theta_{p,q}^{\rm dec}, \theta_{p,q}^{\rm inc} \right] = \varnothing.
\label{eq:noIntersection}
\end{equation}
\label{le:H}
\end{lemma}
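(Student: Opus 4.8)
The plan is to reduce the lemma to the characterization asserted in the sentence immediately preceding it, namely that under the exactly-two-solutions hypothesis one has $H_{p,q}(\theta) \le 1$ if and only if $\theta \in \left[ \theta_{p,q}^{\rm dec}, \theta_{p,q}^{\rm inc} \right]$. Granting this, the lemma is a pure complementation argument on the circle $K$: the assertion that $H_{p,q}(\theta) > 1$ for all $\theta \in J$ says precisely that no $\theta \in J$ lies in the sublevel set $\left[ \theta_{p,q}^{\rm dec}, \theta_{p,q}^{\rm inc} \right]$, which is exactly the disjointness condition \eqref{eq:noIntersection}. So modulo the intermediate characterization nothing further is required.

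Since that characterization carries the actual content, I would justify it as follows. The function $H_{p,q}$ is continuous and strictly positive on $K$, being the norm of $M_{p,q} v$ for a unit vector $v$ with $M_{p,q}$ invertible (recall $\det(M_{p,q}) > 0$). By hypothesis the equation $H_{p,q}(\theta) = 1$, equivalently the quadratic \eqref{eq:Hsolns} in $\tan(\theta)$, has exactly the two roots $\theta_{p,q}^{\rm dec}$ and $\theta_{p,q}^{\rm inc}$ on $K$. These two points cut $K$ into two open arcs, and on each arc the continuous function $H_{p,q} - 1$ has no zero, so by the intermediate value theorem it keeps a constant sign there. To decide which arc is the negative one I would use the monotonicity data: since $H_{p,q}$ is decreasing at $\theta_{p,q}^{\rm dec}$ one has $H_{p,q}(\theta) < 1$ for $\theta$ just above $\theta_{p,q}^{\rm dec}$, and since $H_{p,q}$ is increasing at $\theta_{p,q}^{\rm inc}$ one has $H_{p,q}(\theta) < 1$ for $\theta$ just below $\theta_{p,q}^{\rm inc}$; both conclusions single out the arc running from $\theta_{p,q}^{\rm dec}$ upward to $\theta_{p,q}^{\rm inc}$. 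Adjoining the two endpoints then gives $\left\{ \theta \in K : H_{p,q}(\theta) \le 1 \right\} = \left[ \theta_{p,q}^{\rm dec}, \theta_{p,q}^{\rm inc} \right]$ in the cyclic convention \eqref{eq:interval}.

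The main obstacle, and really the only point that needs care, is the cyclic bookkeeping: on the circle $K$ a pair of endpoints bounds two closed arcs, and I must confirm that the one selected by the monotonicity data is the arc denoted $\left[ \theta_{p,q}^{\rm dec}, \theta_{p,q}^{\rm inc} \right]$ under convention \eqref{eq:interval} rather than its complement. The labels dec and inc are exactly what resolve this, since decreasing at the first endpoint and increasing at the second forces the sublevel arc to be traversed from $\theta_{p,q}^{\rm dec}$ to $\theta_{p,q}^{\rm inc}$ in the orientation that \eqref{eq:interval} encodes. Once this orientation is confirmed, the complementation step of the first paragraph delivers the claimed equivalence with no additional computation.
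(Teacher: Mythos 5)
Your proposal is correct and follows the same route as the paper, which states the lemma without a formal proof precisely because it is an immediate consequence of the preceding characterisation $H_{p,q}(\theta) \le 1 \iff \theta \in \left[ \theta_{p,q}^{\rm dec}, \theta_{p,q}^{\rm inc} \right]$. Your filling-in of that characterisation (continuity of $H_{p,q}$ on $K$, constant sign on each of the two complementary arcs, and the monotonicity labels resolving which arc is the sublevel set under the cyclic convention \eqref{eq:interval}) is sound and simply makes explicit what the paper leaves to the reader.
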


\section{Large values of $p_{\rm min}$}
\label{sec:pMin}
\setcounter{equation}{0}

Here we impose the additional constraint
\begin{equation}
\tau_L > \delta_L + 1,
\label{eq:extraCondition}
\end{equation}
and show that if $p_{\rm min}$ is sufficiently large then we can expect the fixed points of the $G_{p,q}$ to be unmixed
and the cone $C_J$, with $J$ as in Proposition \ref{pr:J}, to be invariant and expanding.
Condition \eqref{eq:extraCondition} implies $f$ has a saddle fixed point in $Q_2$ with positive eigenvalues.
For large values of $p$ the effect of the saddle dominates the nature of $M_{p,q}$ 
in a way that is favourable for $C_J$ for be invariant and expanding.

First observe that for any $p,q \ge 1$ we can write
\begin{equation}
G_{p,q}(\theta) = G_{0,q} \left( G_{1,0}^p(\theta) \right).
\label{eq:GAsComposition}
\end{equation}
In \eqref{eq:GAsComposition} we apply the map $G_{1,0}$ $p$ times, then apply $G_{0,q}$.
Since $M_{1,0} = A_L$, by Lemma \ref{le:G2} the condition \eqref{eq:extraCondition}
ensures $G_{1,0}$ has unique stable and unstable fixed points
$\theta_{1,0}^s$ and $\theta_{1,0}^u$ 
(shown in Fig.~\ref{fig:GH}).

\begin{proposition}
Let $q \ge 1$.
Suppose \eqref{eq:extraCondition} is satisfied and $G_{0,q} \left( \theta_{1,0}^s \right) \ne \theta_{1,0}^u$.
Then \eqref{eq:Mcondition} is satisfied for all sufficiently large values of $p \ge 1$
and $\theta_{p,q}^s \to G_{0,q} \left( \theta_{1,0}^s \right)$
and $\theta_{p,q}^u \to \theta_{1,0}^u$ as $p \to \infty$.
Also $H_{p,q}(\theta) \to \infty$ as $p \to \infty$ for all $\theta \ne \theta_{1,0}^u$.
\label{pr:limit}
\end{proposition}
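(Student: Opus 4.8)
The plan is to analyze $M_{p,q}=A_R^q A_L^p$ as $p\to\infty$ through the spectral decomposition of $A_L$, showing that after rescaling the matrix collapses to a rank-one limit whose eigenstructure encodes all three conclusions. First I would record that \eqref{eq:extraCondition} forces $A_L$ to have distinct real eigenvalues $\sigma_1>1>\sigma_2>0$: indeed $\sigma_1\sigma_2=\delta_L$ and $\sigma_1+\sigma_2=\tau_L$, so $\tau_L>\delta_L+1$ is equivalent to $(\sigma_1-1)(\sigma_2-1)=\delta_L-\tau_L+1<0$, and it also gives $\tau_L^2>4\delta_L$. Writing the spectral decomposition $A_L=\sigma_1 w_1 u_1^T+\sigma_2 w_2 u_2^T$ with right eigenvectors $w_1,w_2$ and left eigenvectors $u_1^T,u_2^T$ normalised so that $u_i^T w_j=\delta_{ij}$, the angle of $w_1$ is $\theta_{1,0}^s$ and the angle of $w_2$ is $\theta_{1,0}^u$ by Lemma~\ref{le:G2} applied to $M_{1,0}=A_L$ (the stable fixed point corresponds to the dominant eigendirection).

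Next I would form the rescaled matrix. Since $A_L^p=\sigma_1^p w_1 u_1^T+\sigma_2^p w_2 u_2^T$ and $\sigma_2/\sigma_1\in(0,1)$,
\begin{equation}
\frac{1}{\sigma_1^p}M_{p,q}=A_R^q\left(w_1 u_1^T+(\sigma_2/\sigma_1)^p w_2 u_2^T\right)\longrightarrow R:=(A_R^q w_1)u_1^T
\nonumber
\end{equation}
as $p\to\infty$, with error $E_p:=A_R^q(\sigma_2/\sigma_1)^p w_2 u_2^T\to\bO$; this makes concrete the intuition behind \eqref{eq:GAsComposition} that iterating $A_L$ drives directions toward $\theta_{1,0}^s$. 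The rank-one matrix $R$ has $\det(R)=0$ and eigenvalues $\kappa:=u_1^T A_R^q w_1$ (with eigenvector $A_R^q w_1$) and $0$ (with eigenvector $w_2$, since $\ker R=\{v:u_1^T v=0\}={\rm span}(w_2)$). The crucial observation is that $\kappa=0$ if and only if $A_R^q w_1$ is parallel to $w_2$, i.e.\ if and only if $G_{0,q}(\theta_{1,0}^s)=\theta_{1,0}^u$; thus the hypothesis $G_{0,q}(\theta_{1,0}^s)\ne\theta_{1,0}^u$ is exactly the statement $\kappa\ne0$, so $R$ has distinct real eigenvalues $\kappa$ and $0$.

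With this in hand the three conclusions follow from continuity. The discriminant ${\rm trace}(\cdot)^2-4\det(\cdot)$ of $\tfrac{1}{\sigma_1^p}M_{p,q}$ converges to $\kappa^2>0$, and since this quantity scales by $\sigma_1^{2p}>0$ under multiplication of the matrix by $\sigma_1^p$, the discriminant of $M_{p,q}$ is positive for all large $p$, which is \eqref{eq:Mcondition}; hence $\theta_{p,q}^s$ and $\theta_{p,q}^u$ are defined for large $p$. Because $\kappa$ and $0$ are simple eigenvalues of $R$, the associated eigenvectors depend continuously on the matrix, so the dominant eigenvector of $M_{p,q}$ (whose angle is $\theta_{p,q}^s$ by Lemma~\ref{le:G2}) converges in direction to $A_R^q w_1$, giving $\theta_{p,q}^s\to G_{0,q}(\theta_{1,0}^s)$, while the subdominant eigenvector converges to $w_2$, giving $\theta_{p,q}^u\to\theta_{1,0}^u$. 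Finally, for $v=(\cos\theta,\sin\theta)$ with $\theta\ne\theta_{1,0}^u$ we have $u_1^T v\ne0$, hence $Rv=(u_1^T v)\,A_R^q w_1\ne\bO$, and $H_{p,q}(\theta)=\sigma_1^p\|(R+E_p)v\|\to\infty$ because $\sigma_1^p\to\infty$ while $\|(R+E_p)v\|\to\|Rv\|>0$.

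The main obstacle is the identification carried out in the second step: pinning down that the rescaled limit $R$ is rank one and that its nonzero eigenvalue $\kappa$ vanishes \emph{precisely} under the excluded degeneracy $G_{0,q}(\theta_{1,0}^s)=\theta_{1,0}^u$. This hinges on the orthogonality $u_1^T w_2=0$ and on using Lemma~\ref{le:G2} to match $\theta^s,\theta^u$ with the dominant and subdominant eigendirections, so that the limiting eigenvectors land on the claimed angles. Once $\kappa\ne0$ is secured, the remaining arguments are routine continuity-of-eigendata estimates.
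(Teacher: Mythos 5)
Your proof is correct, but it takes a genuinely different route from the paper. The paper argues dynamically at the level of the angle maps: it writes $G_{p,q}=G_{0,q}\circ G_{1,0}^p$ as in \eqref{eq:GAsComposition}, notes that every $\theta\ne\theta_{1,0}^u$ is attracted to $\theta_{1,0}^s$ under $G_{1,0}$ (linearity of $v\mapsto A_L v$), so $G_{p,q}\to G_{0,q}(\theta_{1,0}^s)$ pointwise away from $\theta_{1,0}^u$, and then invokes the degree-one circle diffeomorphism structure (Lemma \ref{le:G1}) to deduce that for large $p$ the map $G_{p,q}$ has a stable fixed point near $G_{0,q}(\theta_{1,0}^s)$ and an unstable one near $\theta_{1,0}^u$; condition \eqref{eq:Mcondition} is then read off from the existence of these fixed points, and the growth of $H_{p,q}$ from $H_{p,0}(\theta)\sim\lambda_1^p$. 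You instead work directly with the matrices: the spectral decomposition of $A_L$ and the rescaling $\sigma_1^{-p}M_{p,q}\to R=(A_R^q w_1)u_1^T$ reduce everything to the eigendata of a rank-one limit, with the hypothesis $G_{0,q}(\theta_{1,0}^s)\ne\theta_{1,0}^u$ appearing transparently as $\kappa=u_1^TA_R^qw_1\ne 0$. Your route buys a more quantitative and arguably more rigorous verification of \eqref{eq:Mcondition} (positivity of the discriminant follows from convergence to $\kappa^2>0$ rather than being inferred from the existence of fixed points of a circle map whose existence itself rests on a pointwise-convergence argument), and it identifies the rate $(\sigma_2/\sigma_1)^p$ of the error term; the paper's route is shorter and stays entirely within the circle-map framework already set up in Lemma \ref{le:G1} and Lemma \ref{le:G2}, which it reuses for the cone construction. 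Your appeal to Lemma \ref{le:G2} to match $\theta^s$ and $\theta^u$ with the dominant and subdominant eigendirections is the same identification the paper uses implicitly, and your closing computation $H_{p,q}(\theta)=\sigma_1^p\|(R+E_p)v\|$ is just an explicit version of the paper's $H_{p,0}(\theta)\sim\lambda_1^p$ argument. One cosmetic point: you should establish $\tau_L^2>4\delta_L$ (via $\tau_L^2>(\delta_L+1)^2\ge 4\delta_L$) \emph{before} treating $\sigma_1,\sigma_2$ as real numbers and signing $(\sigma_1-1)(\sigma_2-1)$; as written the logical order is slightly inverted, though all the stated facts are true.
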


\begin{proof}
The basin of attraction of the stable fixed point $\theta_{1,0}^s$ of $G_{1,0}$
consists of all $\theta \in K$ except the unstable fixed point $\theta_{1,0}^u$
(because $v \mapsto A_L v$ is linear).
Thus $G_{1,0}^p(\theta) \to \theta_{1,0}^s$ as $p \to \infty$ for all $\theta \ne \theta_{1,0}^u$.
Thus by \eqref{eq:GAsComposition},
$G_{p,q}(\theta) \to G_{0,q} \left( \theta_{1,0}^s \right)$ as $p \to \infty$ for all $\theta \ne \theta_{1,0}^u$.
But $G_{p,q}$ is a degree-one circle map (Lemma \ref{le:G1}) and a diffeomorphism
thus, for sufficiently large values of $p$, must have a stable fixed point $\theta_{p,q}^s$
with $\theta_{p,q}^s \to G_{0,q} \left( \theta_{1,0}^s \right)$ as $p \to \infty$
and an unstable fixed point $\theta_{p,q}^u$
with $\theta_{p,q}^u \to \theta_{1,0}^u$ as $p \to \infty$.
The existence of these fixed points implies \eqref{eq:Mcondition}.

By \eqref{eq:extraCondition}, $A_L$ has eigenvalues $0 < \lambda_2 < 1 < \lambda_1$.
The stable fixed point $\theta_{1,0}^s$ of $G_{1,0}$ corresponds to the unstable eigen-direction of $A_L$,
thus $H_{1,0} \left( \theta_{1,0}^s \right) = \lambda_1$.
For any $\theta \ne \theta_{1,0}^u$, $G_{1,0}^p(\theta) \to \theta_{1,0}^s$ implies
$H_{p,0}(\theta) \sim \lambda_1^p$,
and so $H_{p,q}(\theta) \to \infty$ because $\det(A_R) = \delta_R \ne 0$.
\end{proof}

\section{A computer-assisted proof of chaos}
\label{sec:algorithm}
\setcounter{equation}{0}

\begin{algorithm}
For a given map of the form \eqref{eq:f} with \eqref{eq:params} we perform the following steps.
\begin{enumerate}
\item
Let $p_{\rm max}$ be the smallest $p \in \left\{ 2,\ldots,\min[p^*,15] \right\}$
for which \eqref{eq:Ucond} and \eqref{eq:FScond} are satisfied with $S = \left( 0, c_p \right)$; if $p_{\rm max}$ does not exist {\sc stop}.
\item
Let $p_{\rm min}$ be the largest $p \in \left\{ 1,\ldots,p_{\rm max}-1 \right\}$
for which \eqref{eq:Vcond} and \eqref{eq:FTcond} are satisfied with $T = \left( 0, c_p \right)$; if $p_{\rm min}$ does not exist {\sc stop}.
\item
Let $\Gamma$ be given by \eqref{eq:allpandq} using \eqref{eq:qMinqMax}.
If \eqref{eq:Mcondition} is not satisfied for some $(p,q) \in \Gamma$ then {\sc stop}.
\item
If the fixed points of the $G_{p,q}$ are mixed (i.e.~not unmixed) then {\sc stop}.
\item
Let $J$ be the interval in Proposition \ref{pr:J}.
If $\theta_{p,q}^{\rm dec}$ and $\theta_{p,q}^{\rm inc}$ do not exist
or $J \cap \left[ \theta_{p,q}^{\rm dec}, \theta_{p,q}^{\rm inc} \right] \ne \varnothing$
for some $(p,q) \in \Gamma$, then {\sc stop}, otherwise output {\sc chaos}.
\end{enumerate}
\label{al:theAlgorithm}
\end{algorithm}

The upper bound of $p = 15$ imposed in Step 1 is a suitable
finite bound to ensure the number of computations is finite.
Algorithm \ref{al:theAlgorithm} picks the largest and smallest allowed values of $p_{\rm min}$ and $p_{\rm max}$, respectively,
in order to minimise the size of the set $\Gamma$ and so minimise the number of conditions in Steps 3 -- 5 that need to hold.

\begin{theorem}
Suppose Algorithm \ref{al:theAlgorithm} outputs {\sc chaos}.
With $\Omega$ as given in Proposition \ref{pr:OmegaForwardInvariant}
and $J$ as given in Proposition \ref{pr:J},
\begin{equation}
\liminf_{n \to \infty} \frac{1}{n} \,\ln \left( \left\| \rD f^n(Z) v \right\| \right) > 0,
\label{eq:liminf2}
\end{equation}
for all $Z \in \Omega \setminus \Xi_\infty$ and all $v \in C_J$.
\label{th:main}
\end{theorem}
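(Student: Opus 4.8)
The plan is to first verify that the success of Algorithm~\ref{al:theAlgorithm} supplies every hypothesis needed to produce an invariant expanding cone for the induced map $F$, and then to transfer the resulting growth estimate for $F$ back to the original map $f$. For the first part I would read off the algorithm's steps: Steps~1--2 (together with \eqref{eq:fSfT}) are exactly the conditions \eqref{eq:Ucond}--\eqref{eq:FTcond} of Proposition~\ref{pr:OmegaForwardInvariant}, so $\Omega \subseteq \Phi_0$, $F(\Omega) \subseteq \Omega$, and the return-time bounds \eqref{eq:chiL}--\eqref{eq:chiR} hold. Step~3 gives \eqref{eq:Mcondition} for every $(p,q) \in \Gamma$, Step~4 gives that the fixed points of the $G_{p,q}$ are unmixed, and Step~5 gives $H_{p,q}(\theta)>1$ on $J$ via Lemma~\ref{le:H}. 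Proposition~\ref{pr:J} then yields $G_{p,q}(J) \subseteq J$ for all $(p,q)\in\Gamma$, so by Lemma~\ref{le:CJ} the cone $C_J$ is invariant and expanding for $F$ on $\Omega$; since $J$ is compact and $\Gamma$ finite, $c = \min_{(p,q)\in\Gamma,\,\theta\in J} H_{p,q}(\theta) > 1$ serves as an expansion factor. Proposition~\ref{pr:lyapExp} then delivers
$$\liminf_{n\to\infty}\frac1n\ln\left(\left\|\rD F^n(Z)v\right\|\right)\ge \ln(c)>0$$
for all $v\in C_J$ and $Z\in\Omega\setminus\Xi_\infty$.

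The substance of the theorem is that the conclusion \eqref{eq:liminf2} concerns $f$, not $F$. Here I would exploit that $F$ is a first-return map: for $Z\notin\Xi_\infty$ there is an integer $N_n(Z)$ with $F^n(Z)=f^{N_n(Z)}(Z)$, and since each application of $F$ is locally a composition $f_R^q\circ f_L^p$, the chain rule telescopes to give $\rD F^n(Z)=\rD f^{N_n(Z)}(Z)$. The return-time bounds \eqref{eq:chiL}--\eqref{eq:chiR} control $N_n$: writing $N_n=\sum_{k=0}^{n-1}(p_k+q_k)$ with each $p_k\in[p_{\rm min},p_{\rm max}]$ and $q_k\in[q_{\rm min},q_{\rm max}]$, we obtain $n(p_{\rm min}+q_{\rm min})\le N_n\le n(p_{\rm max}+q_{\rm max})$. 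Along the subsequence $m=N_n$ this already yields a positive lower bound for $\frac1m\ln\|\rD f^m(Z)v\|$ of roughly $\frac{\ln c}{p_{\rm max}+q_{\rm max}}$.

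To obtain the full $\liminf$ over all $m$, I would fill in the iterates strictly between consecutive return times. For $N_n\le m<N_{n+1}$, the matrix $\rD f^m(Z)$ is obtained from $\rD f^{N_n}(Z)$ by left-multiplication by at most $p_{\rm max}+q_{\rm max}$ factors, each equal to $A_L$ or $A_R$. Since $A_L$ and $A_R$ are invertible (their determinants are $\delta_L,\delta_R>0$), each factor satisfies $\|A_\bullet w\|\ge\gamma\|w\|$ with $\gamma=\min\!\left\{\|A_L^{-1}\|^{-1},\|A_R^{-1}\|^{-1}\right\}>0$, so
$$\left\|\rD f^m(Z)v\right\|\ge \gamma^{\,p_{\rm max}+q_{\rm max}}\left\|\rD f^{N_n}(Z)v\right\|\ge \gamma^{\,p_{\rm max}+q_{\rm max}}\,c^{\,n}\,\|v\|.$$
Taking $\frac1m\ln(\cdot)$, the constant factors $\gamma^{p_{\rm max}+q_{\rm max}}$ and $\|v\|$ contribute terms vanishing as $m\to\infty$, while $\frac nm> \frac{n}{(n+1)(p_{\rm max}+q_{\rm max})}\to\frac1{p_{\rm max}+q_{\rm max}}$ (and the associated $n\to\infty$ as $m\to\infty$). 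Hence the $\liminf$ is at least $\frac{\ln c}{p_{\rm max}+q_{\rm max}}>0$, which is \eqref{eq:liminf2}.

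The main obstacle is precisely this transfer from $F$ to $f$: a positive Lyapunov exponent for the return map does not by itself preclude temporary contraction during the excursions between returns, and this must be ruled out uniformly. The two facts that make it work are the uniform upper bound $p_{\rm max}+q_{\rm max}$ on excursion lengths (from Proposition~\ref{pr:OmegaForwardInvariant}) and the uniform invertibility of $A_L,A_R$; together they bound the worst-case intermediate contraction by a fixed constant that is negligible in the limit defining the growth rate. A minor point to check along the way is that $Z\notin\Xi_\infty$ indeed guarantees the entire $f$-orbit avoids $\Sigma$, so that each $\rD f^m(Z)$ is an unambiguous product of $A_L$'s and $A_R$'s.
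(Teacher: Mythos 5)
Your proposal is correct and follows essentially the same route as the paper: verify that Steps 1--5 of the algorithm supply the hypotheses of Propositions \ref{pr:OmegaForwardInvariant} and \ref{pr:J} and Lemmas \ref{le:CJ} and \ref{le:H}, obtain an invariant expanding cone with expansion factor $c>1$, apply Proposition \ref{pr:lyapExp}, and convert the growth rate for $F$ into the bound $\frac{\ln(c)}{p_{\rm max}+q_{\rm max}}$ for $f$. The only difference is that you spell out the transfer step (bounding the iterates between returns using the invertibility of $A_L$ and $A_R$), which the paper asserts in a single sentence; your elaboration is a valid filling-in of that detail, not a different argument.
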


\begin{proof}
Since Algorithm \ref{al:theAlgorithm} does not stop in Steps 1 and 2,
the assumptions in Proposition \ref{pr:OmegaForwardInvariant} hold so $\Omega \subseteq \Phi_0$ and $F(\Omega) \subseteq \Omega$.
Since Algorithm \ref{al:theAlgorithm} does not stop in Steps 3 and 4,
the assumptions in Proposition \ref{pr:J} hold so $C_J$ is invariant by Lemma \ref{le:CJ}(i).
Since Algorithm \ref{al:theAlgorithm} does not stop in Step 5,
\eqref{eq:noIntersection} holds for all $(p,q) \in \Gamma$
thus $C_J$ is expanding by Lemma \ref{le:CJ}(iii) and Lemma \ref{le:H}.
Let $c > 1$ be an expansion factor for $C_J$.
Then \eqref{eq:liminf} holds by Proposition \ref{pr:lyapExp}.
This implies the left hand-side of \eqref{eq:liminf2} is at least
$\frac{\ln(c)}{p_{\rm max} + q_{\rm max}} > 0$.
\end{proof}

\section{Numerical results}
\label{sec:numerics}
\setcounter{equation}{0}

\begin{figure}[b!]
\begin{center}
\setlength{\unitlength}{1cm}
\begin{picture}(16,8)
\put(0,0){\includegraphics[height=8cm]{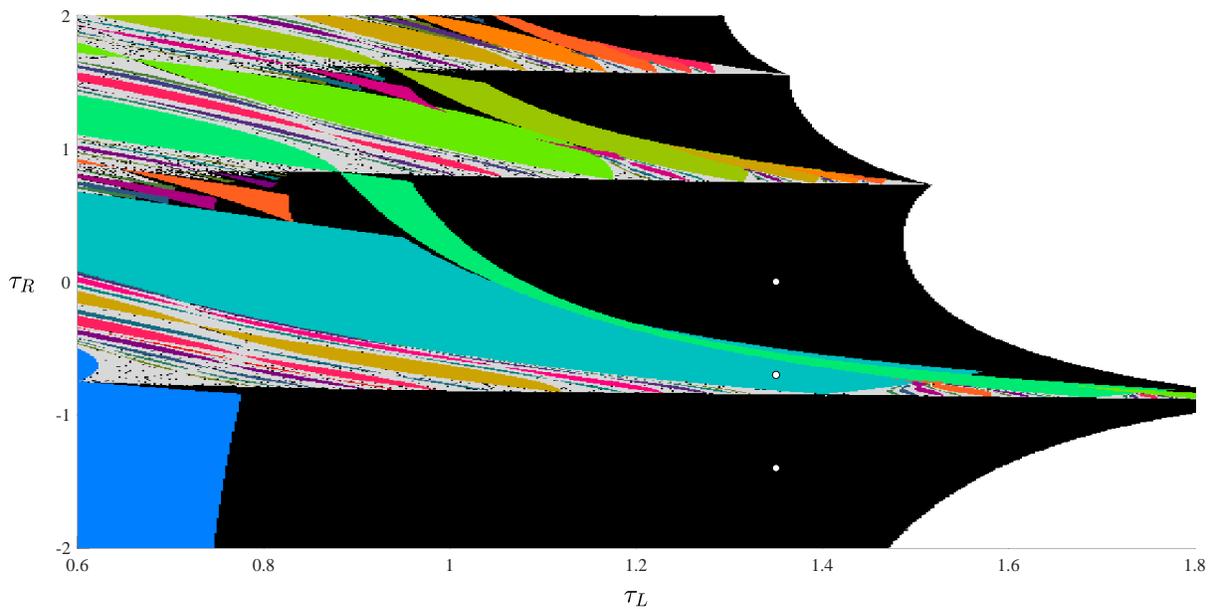}}
\end{picture}
\caption{
A numerically obtained bifurcation diagram of \eqref{eq:f} with \eqref{eq:dLdR}.
The map has a periodic attractor with period at most $30$ in the coloured regions,
some other attractor with negative Lyapunov exponent in the grey regions,
an attractor with a positive Lyapunov exponent in the black regions,
and no attractor in the white regions.
The three dots indicate the parameter values of Fig.~\ref{fig:qqThree}.
\label{fig:bifSet1}
}
\end{center}
\end{figure}

In this section we illustrate Algorithm \ref{al:theAlgorithm}
over the two-dimensional slice of the parameter space of \eqref{eq:f} defined by fixing
\begin{align}
\delta_L &= 0.2, &
\delta_R &= 2, &
\mu &= 1.
\label{eq:dLdR}
\end{align}
First, Fig.~\ref{fig:bifSet1} shows a two-parameter bifurcation diagram of \eqref{eq:f} with \eqref{eq:dLdR}.
Coloured regions are where there exists a stable periodic solution of period at most $30$.
To characterise the long-term dynamics outside these regions,
we computed the forward orbit of the origin over a grid of values of $\tau_L$ and $\tau_R$.
Grey regions are where a numerically computed Lyapunov exponent for this orbit was negative;
black regions are where this Lyapunov exponent was positive.
White regions are where the orbit appeared to diverge.

\begin{figure}[b!]
\begin{center}
\setlength{\unitlength}{1cm}
\begin{picture}(16,8)
\put(0,0){\includegraphics[height=8cm]{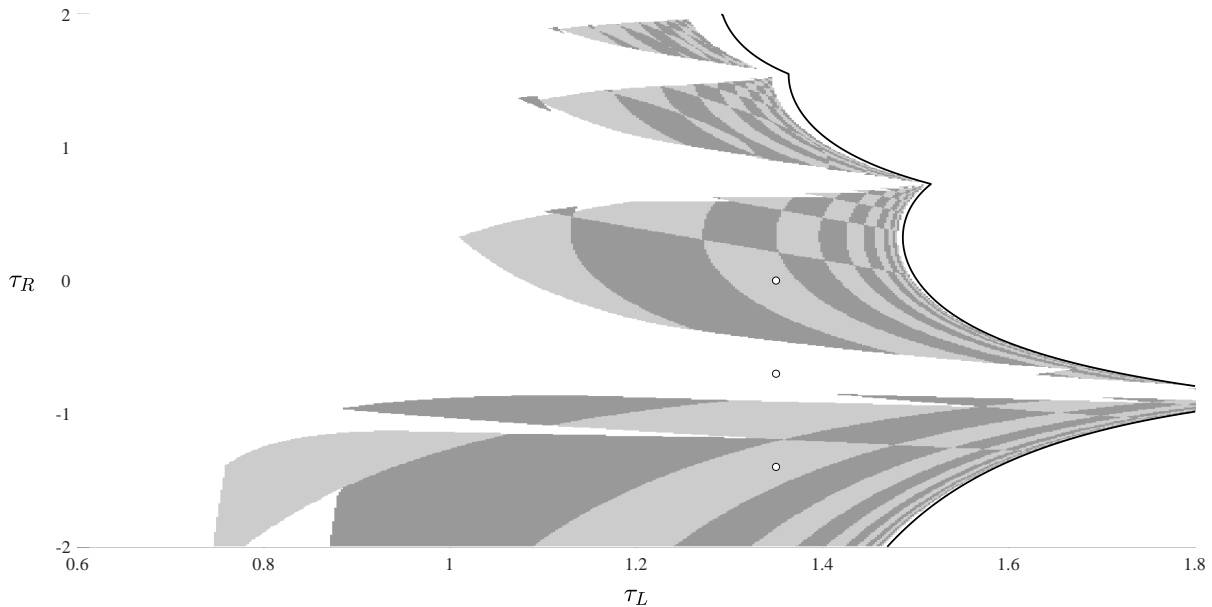}}
\end{picture}
\caption{
Regions where Algorithm \ref{al:theAlgorithm} outputs {\sc chaos}
(light grey where $p_{\rm max} - p_{\rm min}$ is even; dark grey where $p_{\rm max} - p_{\rm min}$ is odd)
for the parameter region of Fig.~\ref{fig:bifSet1}.
The black curve is a locus of homoclinic corners \cite{Si16b}.
\label{fig:bifSet2}
}
\end{center}
\end{figure}

Fig.~\ref{fig:bifSet2} illustrates the results of Algorithm \ref{al:theAlgorithm} over the same parameter range.
Shaded regions are where Algorithm \ref{al:theAlgorithm} outputted {\sc chaos}.
In order to reveal some of the underlying processes, the region is
light grey if $p_{\rm max} - p_{\rm min}$ is even
and dark grey if $p_{\rm max} - p_{\rm min}$ is odd.

As expected these regions form a proper subset of the black regions of Fig.~\ref{fig:bifSet1}.
That is, Algorithm \ref{al:theAlgorithm} outputs {\sc chaos} whenever our numerical estimation of the Lyapunov exponent is positive,
but the converse is not necessarily true.
Nevertheless, at least for the slice of parameter space shown,
Algorithm \ref{al:theAlgorithm} is quite successful in that it outputs {\sc chaos}
over the majority of the region where numerical simulations suggest a chaotic attractor exists.

\begin{figure}[t!]
\begin{center}
\setlength{\unitlength}{1cm}
\begin{picture}(10,16.8) 
\put(0,11.6){\includegraphics[height=5cm]{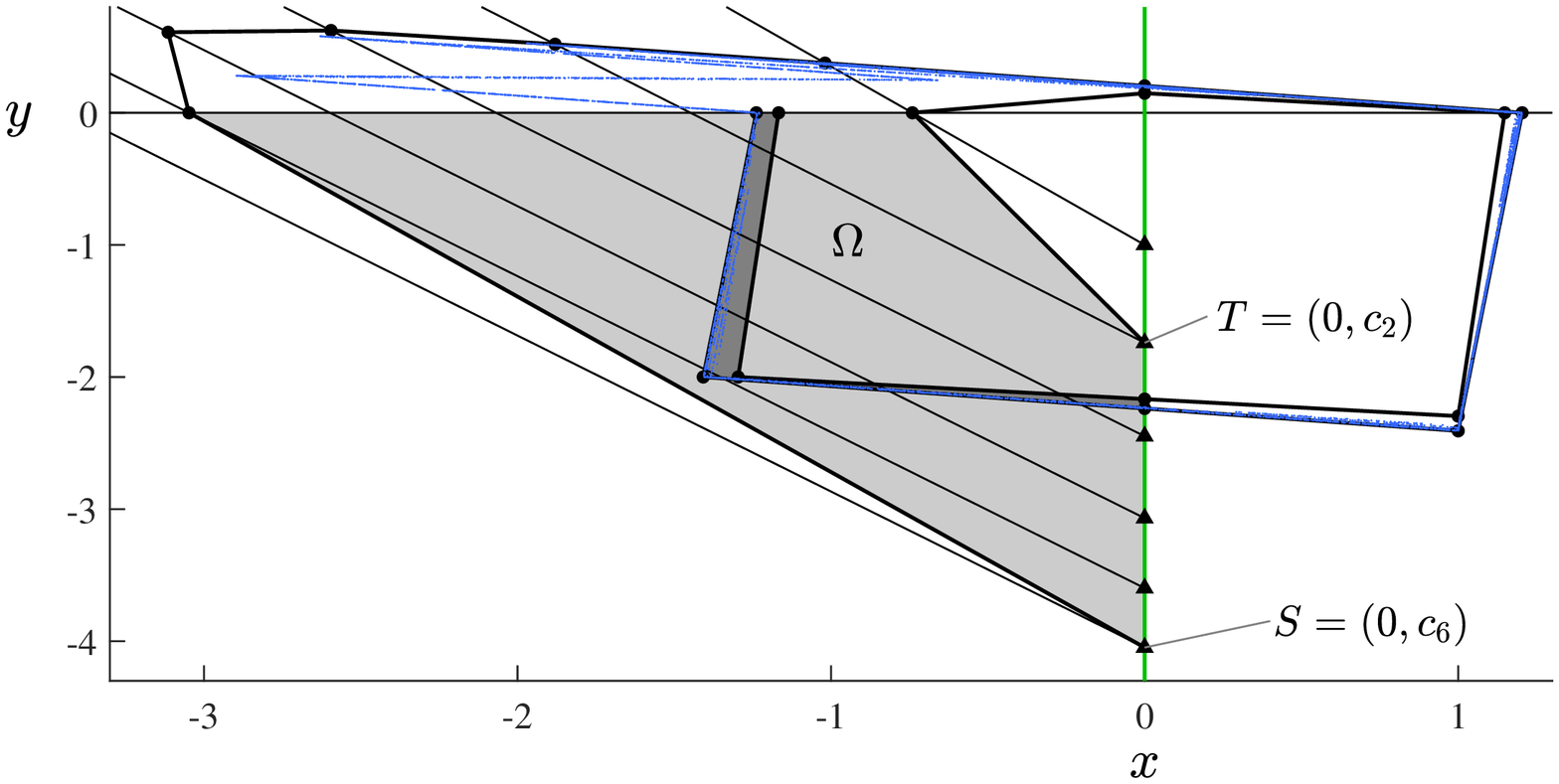}}
\put(0,5.8){\includegraphics[height=5cm]{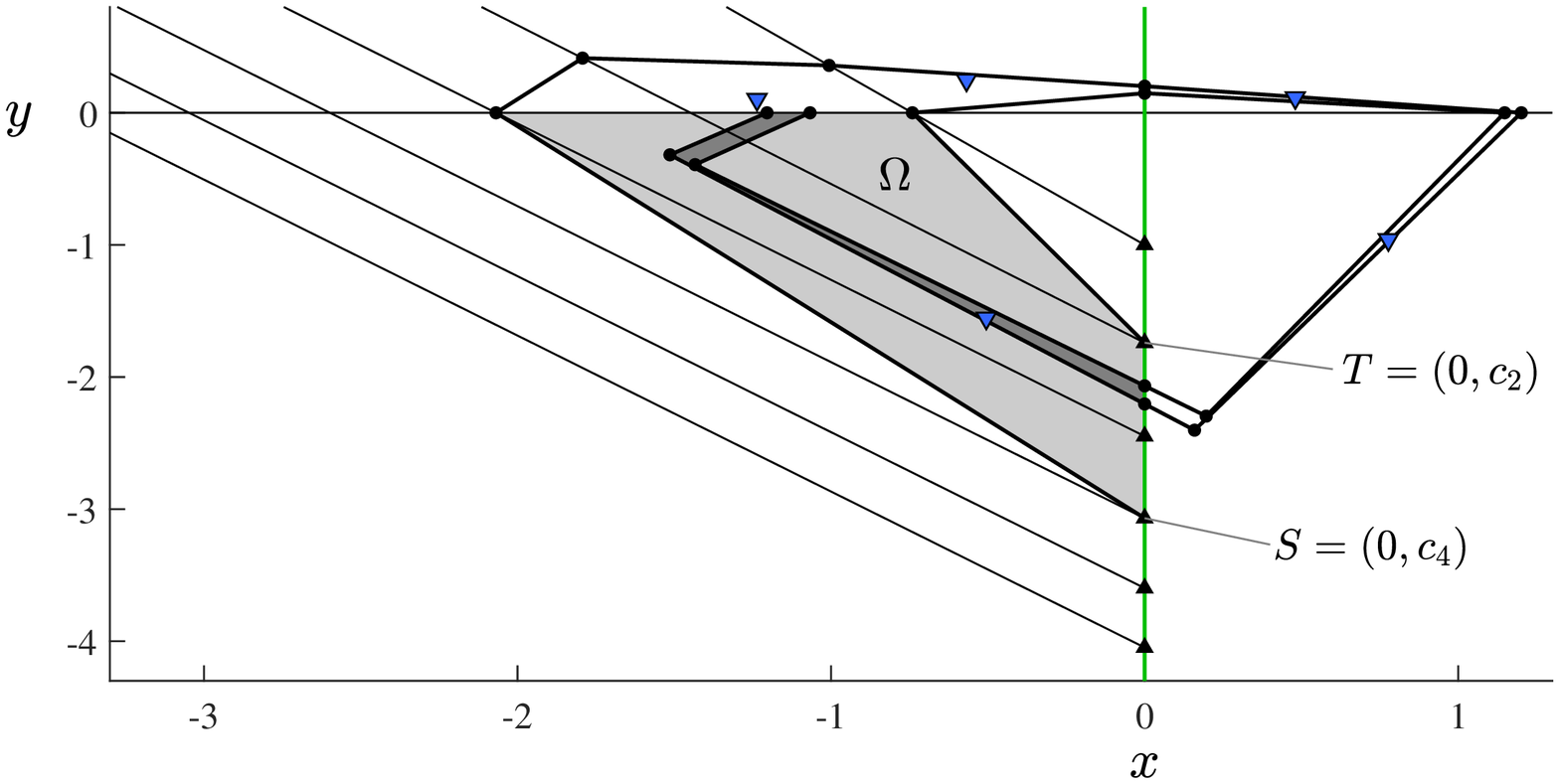}}
\put(0,0){\includegraphics[height=5cm]{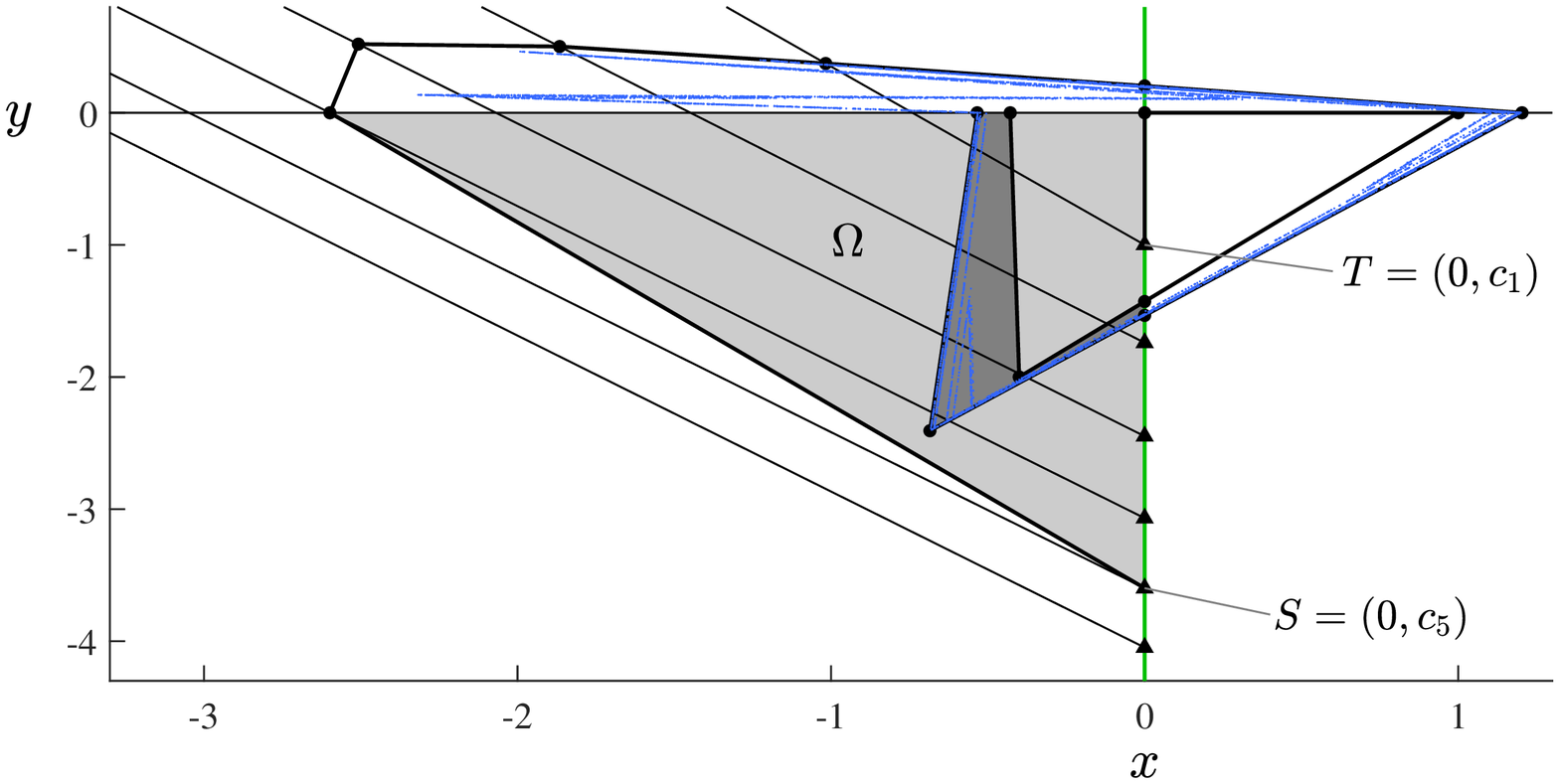}}
\put(4,16.8){\small a)~~$\tau_R = 0$}
\put(4,11){\small b)~~$\tau_R = -0.7$}
\put(4,5.2){\small c)~~$\tau_R = -1.4$}
\end{picture}
\caption{
Phase portraits of \eqref{eq:f} with \eqref{eq:dLdR}, $\tau_L = 1.35$, and three different values of $\tau_R$
corresponding to the dots in Figs.~\ref{fig:bifSet1} and \ref{fig:bifSet2}.
In each plot we show $\Omega$ (as produced by Algorithm \ref{al:theAlgorithm} and shaded light grey),
$F(\Omega)$ (dark grey), and a numerically computed attractor (blue).
\label{fig:qqThree}
}
\end{center}
\end{figure}

Phase portraits corresponding to the three sample parameter combinations
highlighted in Figs.~\ref{fig:bifSet1} and \ref{fig:bifSet2} are shown in Fig.~\ref{fig:qqThree}.
In panels (a) and (c) Algorithm \ref{al:theAlgorithm} outputted {\sc chaos}
with $(p_{\rm min},p_{\rm max}) = (2,6)$ in panel (a)
and $(p_{\rm min},p_{\rm max}) = (1,5)$ in panel (c).
For these parameter values \eqref{eq:f} appears to have a unique attractor (shown with blue dots).
In panel (b) Algorithm \ref{al:theAlgorithm} completed Steps 1 and 2,
producing $(p_{\rm min},p_{\rm max}) = (2,4)$,
but stopped at Step 3 because \eqref{eq:Mcondition} is not satisfied for $(p,q) = (3,2)$.
Indeed for these parameter values \eqref{eq:f} has an asymptotically stable period-$5$ solution (shown with blue triangles).
The point $Z \in \Omega$ of this periodic solution
satisfies $F(Z) = Z$ with $(p,q) = (3,2)$ in \eqref{eq:Fpq}.

As a final remark, for the considered parameter slice
attractors are destroyed on the piecewise-smooth curve shown in Fig.~\ref{fig:bifSet2}.
On this curve the stable and unstable manifolds of the saddle fixed point in $x < 0$ attain a `homoclinic corner' 
(a first homoclinic tangency except the invariant manifolds are piecewise-linear).
As seen in Fig.~\ref{fig:bifSet1} the periodicity regions accumulate at the kinks of this curve
and this was proved in a general setting in \cite{Si20}.

\section{Discussion}
\label{sec:conc}
\setcounter{equation}{0}

We have shown how numerical methods can be used to verify (up to numerical accuracy)
a finite set of conditions that imply a chaotic attractor exists in the 2d BCNF.
This avoids lengthy computations and estimates of limiting quantities. 
There are further embellishments that could be employed,
for example rather than check the conditions at individual points in parameter space
one could determine codimension-one surfaces in parameter space that bound where each condition holds.
If these surfaces bound an open subset of parameter space,
then in this set the 2d BCNF exhibits robust chaos. 

It remains to further relate the conditions to the dynamics of the map.
If the failure of a condition does not correspond to the destruction of a chaotic attractor
(which does occur in a similar setting in \cite{Si20e}),
it may correspond to a crisis where the attractor jumps in size (see \cite{GlSi20b})
or experiences some tangible change to its geometry.

It is natural to ask how our approach can be applied to maps that are piecewise-smooth, but not piecewise-linear.
If we do not drop the nonlinear terms used to create the 2d BCNF
we expect that these terms can be controlled by assuming $\mu$ is small and rescaling.
In this way the linear terms should dominate
and the trapping region and contracting-invariant cone should persist.
Already Young \cite{Yo85} has a theoretical analysis that shows nonlinear terms can be incorporated into the analysis in some settings.
   
The accurate simulation of long time solutions to piecewise-smooth systems can be a problematic (micro-chaos is one aspect of this \cite{GlKo10b}).
The finite time calculations required by our geometric approach provides significantly
more robustness to the use of computers for proving the existence of chaotic attractors.

\begin{section}*{Funding}
This work was supported by Marsden Fund contract MAU1809, managed by Royal Society Te Ap\={a}rangi.
\end{section}


\begin{thebibliography}{10}

\bibitem{AwLa03}
J.~Awrejcewicz and C.~Lamarque.
\newblock {\em Bifurcation and Chaos in Nonsmooth Mechanical Systems.}
\newblock World Scientific, Singapore, 2003.

\bibitem{BaYo98}
S.~Banerjee, J.A. Yorke, and C.~Grebogi.
\newblock Robust chaos.
\newblock {\em Phys. Rev. Lett.}, 80(14):3049--3052, 1998.

\bibitem{DeVa93}
W.~de~Melo and S.~van Strien.
\newblock {\em One-Dimensional Dynamics.}
\newblock Springer-Verlag, New York, 1993.

\bibitem{DiBu08}
M.~di~Bernardo, C.J. Budd, A.R. Champneys, and P.~Kowalczyk.
\newblock {\em Piecewise-smooth Dynamical Systems. Theory and Applications.}
\newblock Springer-Verlag, New York, 2008.

\bibitem{EdGl14}
R.~Edwards and L.~Glass.
\newblock Dynamics in genetic networks.
\newblock {\em Amer. Math. Monthly}, 121(9):793--809, 2014.

\bibitem{Gl16e}
P.~Glendinning.
\newblock Bifurcation from stable fixed point to {2D} attractor in the border
  collision normal form.
\newblock {\em IMA J. Appl. Math.}, 81(4):699--710, 2016.

\bibitem{Gl17}
P.~Glendinning.
\newblock Robust chaos revisited.
\newblock {\em Eur. Phys. J. Special Topics}, 226(9):1721--1738, 2017.

\bibitem{GlJe19}
P.~Glendinning and M.R. Jeffrey.
\newblock {\em An Introduction to Piecewise Smooth Dynamics.}
\newblock Birkhauser, Boston, 2019.

\bibitem{GlKo10b}
P.~Glendinning and P.~Kowalczyk.
\newblock Micro-chaotic dynamics due to digital sampling in hybrid systems of
  {F}ilippov type.
\newblock {\em Phys. D}, 239:58--71, 2010.

\bibitem{GlSi20b}
P.A. Glendinning and D.J.W. Simpson.
\newblock Robust chaos and the continuity of attractors.
\newblock {\em Trans. Math. Appl.}, 4(1):tnaa002, 2020.

\bibitem{GlSi21}
P.A. Glendinning and D.J.W. Simpson.
\newblock A constructive approach to robust chaos using invariant manifolds and
  expanding cones.
\newblock {\em Discrete Contin. Dyn. Syst.}, 41(7):3367--3387, 2021.

\bibitem{Jo03}
M.~Johansson.
\newblock {\em Piecewise Linear Control Systems.}, volume 284 of {\em Lecture
  Notes in Control and Information Sciences.}
\newblock Springer-Verlag, New York, 2003.

\bibitem{KoLi11}
L.~Kocarev and S.~Lian, editors.
\newblock {\em Chaos-Based Cryptography. Theory, Algorithms and Applications}.
\newblock Springer, New York, 2011.

\bibitem{Lo78}
R.~Lozi.
\newblock Un attracteur \'{e}trange(?) du type attracteur de {H}\'{e}non.
\newblock {\em J. Phys. (Paris)}, 39(C5):9--10, 1978.
\newblock In French.

\bibitem{Mi80}
M.~Misiurewicz.
\newblock Strange attractors for the {L}ozi mappings.
\newblock In R.G. Helleman, editor, {\em Nonlinear dynamics, Annals of the New
  York Academy of Sciences}, pages 348--358, New York, 1980. Wiley.

\bibitem{NuYo92}
H.E. Nusse and J.A. Yorke.
\newblock Border-collision bifurcations including ``period two to period
  three'' for piecewise smooth systems.
\newblock {\em Phys. D}, 57:39--57, 1992.

\bibitem{Si16}
D.J.W. Simpson.
\newblock Border-collision bifurcations in $\mathbb{R}^n$.
\newblock {\em SIAM Rev.}, 58(2):177--226, 2016.

\bibitem{Si16b}
D.J.W. Simpson.
\newblock Unfolding homoclinic connections formed by corner intersections in
  piecewise-smooth maps.
\newblock {\em Chaos}, 26:073105, 2016.

\bibitem{Si20e}
D.J.W. Simpson.
\newblock Detecting invariant expanding cones for generating word sets to
  identify chaos in piecewise-linear maps.
\newblock {\em Submitted}., 2020.

\bibitem{Si20}
D.J.W. Simpson.
\newblock Unfolding codimension-two subsumed homoclinic connections in
  two-dimensional piecewise-linear maps.
\newblock {\em Int. J. Bifurcation Chaos}, 30(3):2030006, 2020.

\bibitem{Yo85}
L.-S. Young.
\newblock Bowen-{R}uelle measures for certain piecewise hyperbolic maps.
\newblock {\em Trans. Amer. Math. Soc.}, 287(1):41--48, 1985.

\end{thebibliography}

\end{document}